\documentclass[12pt]{amsart}
\usepackage{
	amsmath,  amssymb,  amsthm,   amscd,
	gensymb,  graphicx, etoolbox, booktabs,
	stackrel, mathtools    
}
\usepackage[usenames,dvipsnames]{xcolor}
\definecolor{darkblue}{rgb}{0,0,0.7}
\definecolor{darkred}{rgb}{0.7,0,0}
\usepackage[colorlinks=true, linkcolor=darkred, citecolor=darkblue, urlcolor=blue, pagebackref=true, breaklinks=true]{hyperref}
\usepackage[capitalise]{cleveref}
\usepackage{enumitem}
\usepackage{placeins}
\usepackage{relsize}
\usepackage{todonotes}
\usepackage{soul}
\usepackage{tikz}
\usetikzlibrary{arrows}
\usetikzlibrary{shapes}
\usepackage{float}
\usepackage{tabularx}
\usepackage{kantlipsum}
\usepackage{array}
\usepackage[top=1.2in,bottom=1in,left=1in,right=1in]{geometry}

\newtheorem{proposition}{Proposition}[section]
\newtheorem{lemma}[proposition]{Lemma}
\newtheorem{theorem}[proposition]{Theorem}

\newtheorem{conjecture}[proposition]{Conjecture}

\tikzstyle{place}=[draw,circle,minimum size=1mm,inner sep=1pt,outer sep=-1.1pt,fill=black]

\usetikzlibrary{shapes.geometric}
\tikzstyle{places}=[draw,rectangle,minimum size=8pt,inner sep=0pt]
\tikzstyle{placesf}=[draw,rectangle,minimum size=5pt,inner sep=0pt]
\tikzstyle{placec}=[draw,circle,minimum size=8pt,inner sep=0pt]
\tikzstyle{placecf}=[draw,circle, minimum size=5pt,inner sep=0pt]

\def\K{\mathbb{K}}

\def\l{\langle}
\def\r{\rangle}

\begin{document}

\title{SIMIS and packing properties of Alexander dual of connected ideals}


 \author{Om Prakash Bhardwaj}
 \address{Chennai Mathematical Institute, Siruseri, Tamil Nadu, India}
 \email{omprakash@cmi.ac.in; opbhardwaj95@gmail.com}

 \author{Kanoy Kumar Das}
\address{Chennai Mathematical Institute, Siruseri, Tamil Nadu, India}
\email{kanoydas0296@gmail.com; kanoydas@cmi.ac.in}

 \author{Rutuja Sawant}
\address{Chennai Mathematical Institute, Siruseri, Tamil Nadu, India}
\email{rutuja@cmi.ac.in}

\keywords{Square-free monomial ideals, Symbolic powers, Packing property, Alexander dual}
\subjclass[2020]{13C05, 13F55, 05C70, 05E40}

\vspace*{-0.4cm}
\begin{abstract}
    In this article, we investigate when the ordinary and symbolic powers of the Alexander dual of connected ideals of graphs coincide, and provide a complete classification of all such graphs.  Furthermore, we prove Conforti--Cornu\`ejols conjecture for this class of~ ideals.
\end{abstract}

\maketitle

\section{Introduction}

Let $R=\K[x_1,x_2, \dots , x_n]$ be a polynomial ring over a field $\K$, and $I\subseteq R$ an ideal. The symbolic power of $I$, denoted by $I^{(s)}$, is defined by 
\[
I^{(s)} = \bigcap_{\mathfrak{p} \in \mathrm{Ass}(I)} I^s R_{\mathfrak{p}} \cap R.
\]
Symbolic powers of ideals have long been a central topic in commutative algebra and algebraic geometry. Geometrically, the $s$-th symbolic power of a prime ideal consists of all elements whose order of vanishing is at least $s$ at every closed point of the corresponding variety. Moreover, symbolic powers play an essential role in the proofs of many fundamental results in these areas.

For square-free monomial ideals, the problem of determining when the equality $I^{(s)} = I^s$ holds, admits a combinatorial interpretation in terms of the associated hypergraph. In particular, this question is closely related to a well-known conjecture of Conforti and Cornu\'{e}jols~\cite{1990Cornejols-et.al}, often referred to as the \emph{packing problem}. The original conjecture of Conforti and Cornu\'{e}jols was reformulated in the language of commutative algebra by Gitler, Valencia, and Villarreal~\cite{2005Gitler-et.al}, and can be stated as follows.

\begin{conjecture}
	For a square-free monomial ideal $I$, all the symbolic powers and ordinary powers coincide if and only if the ideal $I$ has the packing property.
\end{conjecture}

For any unexplained terminology, we refer the reader to Section~\ref{sec: preli}. One direction of the conjecture is known and is relatively easy to prove: if $I^{(s)} = I^s$ for all $s \ge 1$, then $I$ has the packing property (see \cite[Section 4]{2015Dao-et.el}). The non-trivial direction is to show that if $I$ has the packing property, then the symbolic and ordinary powers of $I$ coincide. The conjecture has been verified for several classes of ideals, including edge ideals of graphs \cite{1994Simis-et.al}, $3$-path ideals of graphs \cite{Alilooee-Banerjee}, $4$-path ideals of graphs \cite{2024Nasernejad}, complementary edge ideals \cite{2025Roy-Saha}, and matroidal ideals \cite{2025Ficarra-Moradi}. Monomial ideals for which the equality $I^{(s)} = I^s$ holds for all $s \geq 1$, are called \emph{Simis ideals}. Other than square-free monomial ideals, the Simis property has also been extensively studied for various classes of monomial ideals in the literature; see, for example, \cite{2023Banerjee-et.al, 2025Banerjee-et.al, 2024Kannoy, Gimnez-et.al2018, Grisalde.et.al2024, MandalPradhan2021, MandalPradhan2022, 2025Pinto-et.al}.

Let $G$ be a simple graph with vertex set $V(G)$ and edge set $E(G)$. The edge ideal of $G$, denoted by $I(G)$, is the square-free monomial ideal generated by the quadratic monomials corresponding to the edges of $G$. By the Stanley--Reisner correspondence, there exists a simplicial complex $\Delta(G)$ such that $I(G)=I_{\Delta(G)}$, where $I_{\Delta(G)}$ denotes the Stanley--Reisner ideal of $\Delta(G)$. The simplicial complex $\Delta(G)$ is commonly known as the independence complex of $G$. In~\cite{Paolini-Salvetti}, Paolini and Salvetti introduced the notion of higher independence complexes of a graph, which generalize the independence complex. For $t\ge 2$, the $t$-independence complex of $G$, denoted by $\mathrm{Ind}_t(G)$, consists of all subsets $A\subseteq V(G)$ such that each connected component of the induced subgraph $G[A]$ has at most $t-1$ vertices. In particular, $\mathrm{Ind}_2(G)$ coincides with the independence complex $\Delta(G)$.

The minimal monomial generators of the Stanley--Reisner ideal of $\mathrm{Ind}_t(G)$ correspond to the connected subgraphs of $G$ with $t$ vertices. Consequently, the ideal $I_t(G) \coloneqq I_{\mathrm{Ind}_t(G)}$ is referred to as the \emph{$t$-connected ideal} of $G$ \cite{2025Ananthnarayan-et.al, 2025Das-et.al}. These ideals naturally generalize edge ideals, since $I_2(G)=I(G)$, and have sparked great interest in the community, see \cite{2023Abdelmalek-et.al, 2025Deshpande-et.al, 2024Deshpande-et.al, 2022Deshpande-et.al,2021Deshpande-et.al, 2025Ghodh-et.al,2025Roy-et.al}. Moreover, for any graph $G$, the classical $3$-path ideals coincide with the $3$-connected~ ideals.

In this article, we study the Conforti--Cornu\'{e}jols conjecture for the Alexander dual of $t$-connected ideals of graphs. In \cite{Bodas-et.al}, Bodas et al., investigated how the packing property behaves under Alexander duality. They proved that if $I$ is the edge ideal of a uniform hypergraph and $I$ satisfies the packing property, then its Alexander dual $I^{\vee}$ also satisfies the packing property, while the converse does not hold in general (see~\cite[Example 5.9]{Bodas-et.al}). In \cite{Alilooee-Banerjee}, Alilooee and Banerjee characterized the packing property for $3$-path ideals and verified the Conforti--Cornu\'{e}jols conjecture in this setting. Consequently, if $I_3(G)$ satisfies the packing property, then its Alexander dual $I_3(G)^{\vee}$ also satisfies the packing property. 

Building upon these results, we give a complete answer to the Conforti--Cornu\'{e}jols conjecture for the Alexander dual of $t$-connected ideals of graphs for all $t\ge 3$. In particular, we prove the following.

\begin{theorem}[\protect{Theorem~\ref{thm: main}}]
    Let $G$ be a connected graph on $n$ vertices and let $t\geq 3$ be an integer. Let $J_t(G)$ denote the Alexander dual of the $t$-connected ideal of $G$. Then the following statements are equivalent:
    \begin{enumerate}
        \item $J_t(G)$ satisfies the packing property;
        \item $J_t(G)^{(s)}=J_t(G)^s$ for all $s\geq 1$;
        \item one of the following holds:
        \begin{enumerate}
            \item[(a)] $n=t$;
            \item[(b)] $G=P_n$ for some $n\geq t$;
            \item[(c)] $G=C_n$ and one of the following holds: \begin{itemize}
            \item[(i)] $t=3 \text{ and } n\in \{3,6,9\}$;
            \item[(ii)] $t=4 \text{ and } n\in \{4,8\}$;
            \item[(iii)] $t\geq 5 \text{ and } n=t$.
        \end{itemize}
        \end{enumerate}        
    \end{enumerate}
\end{theorem}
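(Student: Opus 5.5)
The plan is to prove $(2)\Rightarrow(1)\Rightarrow(3)\Rightarrow(2)$. The implication $(2)\Rightarrow(1)$ is the known easy direction of the Conforti--Cornu\'ejols conjecture (\cite[Section 4]{2015Dao-et.el}), so no new work is needed there.

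Throughout we use a concrete description of $J_t(G)$. Its minimal generators are the monomials $x_C=\prod_{v\in C}x_v$, where $C$ ranges over the minimal vertex sets meeting every connected $t$-subset of $V(G)$. Equivalently, $V(G)\setminus C$ is a maximal face of $\mathrm{Ind}_t(G)$. The associated primes of $J_t(G)$ are the primes $\mathfrak p_S=(x_v: v\in S)$, where $S$ is the vertex set of a connected subgraph of $G$ on $t$ vertices.

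\textbf{$(1)\Rightarrow(3)$.} We use that the packing property passes to minors: setting a variable to $0$ (deletion) and setting it to $1$ (contraction) preserve it. We aim to show that a connected graph $G$ with $n>t$ which is neither a path nor a cycle has an induced minor of $J_t(G)$ without the König property.
\begin{itemize}
\item If $G$ has a vertex of degree $\ge 3$, choose a spider (subdivided claw) or a small subtree on slightly more than $t$ vertices and localize to it. Contracting the variables outside it, and deleting suitably, should produce a small ideal with $\tau>\nu$. Here $\tau$ is the minimal number of vertices meeting all generators, and $\nu$ is the maximal number of pairwise disjoint generators. For $t=3$ the claw $K_{1,3}$ should already work; in general we use a claw whose legs have lengths summing to $t$.
\item If $G=C_n$ with $n\neq t$, the minimal covers of connected $t$-sets are the sets $C$ whose cyclic gaps are all $\le t-1$. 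We then compute $\tau$ and $\nu$ for $J_t(C_n)$ and its minors. We expect König to fail exactly outside the listed cases: $t=3$ with $n\notin\{3,6,9\}$, $t=4$ with $n\notin\{4,8\}$, and $t\ge5$ with $n>t$. The last case should come from an arithmetic obstruction in which $n$ is not suitably divisible, or from an explicit minor. Small cases will need direct checks.
\end{itemize}

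\textbf{$(3)\Rightarrow(2)$.} Case (a) is immediate, since $J_t(G)=(x_1,\dots,x_n)$ is a prime generated by variables.

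For $P_n$, the hypergraph of $I_t(P_n)$ consists of intervals, so it is an interval (totally unimodular) hypergraph. Its blocker therefore has the max-flow–min-cut property, hence it is Simis. Alternatively, $J_t(P_n)$ is the ideal of covers of a totally unimodular clutter, which is Mengerian.

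The finite cycle cases $(t,n)\in\{(3,3),(3,6),(3,9),(4,4),(4,8)\}$, with $n=t$ trivial, are handled in one of two ways. One is a direct computation, verifying $J^{(s)}=J^s$ up to the bound given by the degree of the associated primes; this is effective via the standard criterion that it suffices to check $s$ up to a bound, or via Macaulay2 using normality together with packing. The other is a structural argument: the minimal covers there form a uniform system of consecutive residues mod $t$, and writing any monomial in $J^{(s)}$ as a product of $s$ generators can be done by peeling off one generator at a time.

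The main obstacle is the converse construction for cycles, namely identifying for each $t$ and $n$ a specific minor violating König. We would first try contraction down to a small cycle $C_m$ with $t<m<2t$ and compute $\tau=2$ versus $\nu=1$ there.
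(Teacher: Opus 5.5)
The genuine gap is in $(1)\Rightarrow(3)$ for $G=C_n$ with $n=\ell t$, $\ell\ge 2$, $(t,n)\notin\{(3,6),(3,9),(4,8)\}$.

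\textbf{Why your cycle construction fails.} In this range $J_t(C_n)$ is itself K\"onig: the products $x_ix_{i+t}\cdots x_{i+(\ell-1)t}$, $1\le i\le t$, are $t$ disjoint generators, and the height is $t$. So you must exhibit a non-K\"onig minor, and the route you propose cannot produce one.
\begin{itemize}
\item Setting $x_v=1$ kills every prime $\mathfrak p_S$ with $v\in S$ and leaves $J_t(P_{n-1})$, which is Simis. Hence every minor that uses a $1$-substitution is packed, and only $0$-substitutions can help.
\item A $0$-substitution replaces $\mathfrak p_S$ by $\mathfrak p_{S\setminus v}$. It therefore creates primes of height $<t$ and never lands on $J_t(C_m)$ with the same $t$.
\item Your test ``$\tau=2$ versus $\nu=1$'' is the K\"onig defect of $I_t(C_m)$, where any two $t$-arcs meet when $m<2t$. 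It is not the defect of $J_t(C_m)$, whose $\tau$ is its height $t$.
\end{itemize}

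\textbf{The missing idea.} The paper zeroes every $t$-th vertex of $C_{\ell t}$. Each $t$-arc contains exactly one of these vertices, so $J_t(C_{\ell t})$ becomes $J_{t-1}(C_{(t-1)\ell})$, which allows induction on $t$.
\begin{itemize}
\item The base case $t=3$, $\ell\ge4$ is handled by explicit $0$-substitutions, split by $n \bmod 12$. Each yields the cover ideal $J_2(C_m)$ of an odd cycle, which is not packed since $C_m$ is not bipartite.
\item The leftover cases $J_4(C_{12})$ and $J_5(C_{10})$ are sent directly to $J_2(C_7)$ and $J_2(C_5)$.
\end{itemize}
Without constructions of this kind, the exceptional list $\{6,9\}$ for $t=3$ and $\{8\}$ for $t=4$ is only conjectured in your proposal.

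\textbf{The other parts.} These are closer to workable.
\begin{itemize}
\item \emph{Vertex of degree $\ge 3$.} An induced claw or spider need not exist (e.g.\ $G=K_{t+1}$), and no deletion is needed. Take any connected $(t+1)$-set $W$ containing a vertex and three of its neighbours, and set all variables outside $W$ to $1$. The surviving primes are $\mathfrak p_{W\setminus v}$ for the $r\ge 3$ non-cut vertices $v$ of $G[W]$. The result is $\langle x_ix_j\mid 1\le i<j\le r\rangle+\langle x_{r+1},\dots,x_{t+1}\rangle$, of height $t$ with at most $t+1-\lceil r/2\rceil$ disjoint generators; this is Lemma~\ref{lem: main}.
\item \emph{Cycles with $t\nmid n$.} Every generator of $J_t(C_n)$ has degree $\ge\lceil n/t\rceil$, so $t$ disjoint generators would need $t\lceil n/t\rceil>n$ vertices.
\item \emph{Paths.} Your route differs from the paper's induction on $n+s$ and is valid once the ``therefore'' is justified. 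If every $t$-window has $a$-weight $\ge s$, replicate each $x_v$ into $a_v$ consecutive copies and colour the copies cyclically with $s$ colours. Each window then contains all colours, and the colour classes give $s$ transversals whose product divides $\mathbf{x}^{a}$. This is shorter than the paper's argument.
\item \emph{The cases $(3,6),(3,9),(4,8)$.} A machine check of packing plus normality is legitimate in principle, but you do not carry it out. Your ``uniform system'' remark is also false: $x_1x_4x_7$ and $x_1x_4x_5x_8$ are both minimal generators of $J_3(C_9)$. The paper instead inducts on $s$, taking a generator $g$ of $J_t(P_{n-1})$ dividing $f$ and enlarging it case by case to a generator of $J_t(C_n)$.
\end{itemize}
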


This paper is organized as follows. In Section~\ref{sec: preli}, we recall the necessary definitions and background results. The proof of the main theorem is given in Section~\ref{sec: main}. As an application, we show that this characterization of Simis ideals yields a solution to a linear programming duality problem.

\section{Preliminaries}\label{sec: preli}
In this section, we recall the necessary definitions and results, and fix the notations used throughout the paper. Let $\mathbb{K}$ be a field and $R = \mathbb{K}[x_1,\ldots,x_n]$ be the polynomial ring with coefficients in $\mathbb{K}$. For a monomial $m=x_1^{a_1}x_2^{a_2}\cdots x_n^{a_n}, a_i\geq 0$, the set $\{x_i \mid a_i \neq 0\}$ is called the \emph{support} of $m$, denoted by $\mathrm{Supp}(m)$. For a monomial ideal $I\subseteq R$, $\mathcal{G}(I)$ will denote the set of all minimal monomial generators of the ideal $I$. Symbolic power of the monomial ideal $I$ can be expressed using the primary decomposition of the ideal (see \cite[Theorem 3.7]{CooperEtAl2017}). In the case of square-free monomial ideals, the expression for the symbolic powers can be given as follows. Let $I\subseteq R$ be a square-free monomial ideal, and assume that $I=P_1\cap P_2\cap \cdots \cap P_k$ is the primary decomposition. Then the $s$-th symbolic power of $I$ is given by
\[
I^{(s)} = P_1^s \cap \ldots \cap P_k^s.
\]

An ideal $I$ is called \emph{Simis} if $I^{(s)} = I^s$ for all $s \geq 1$. A square-free monomial ideal $I\subseteq R$ of height $c$ is called \emph{K\"onig} if there exists a regular sequence of monomials in $I$ of length $c$. The ideal $I$ is said to have the \emph{packing property} if every ideal obtained from $I$ by setting any number of variables equal to $0$ or $1$ is K\"onig. Setting some variables equal to $0$ in an ideal is a well-known technique in commutative algebra, often referred to as \emph{restriction}. In general, the restriction operation helps identify minimal forbidden structures with respect to properties that are compatible with restriction. For square-free monomial ideals, the equality of ordinary and symbolic powers is one such property. The classical characterization of the equality of ordinary and symbolic powers of edge ideals of simple graphs uses this idea effectively: if the graph contains an odd cycle, then some symbolic power differs from the corresponding ordinary power. On the other hand, setting variables equal to $1$ can be viewed as localizing at those variables.

Let $G$ be a simple graph with vertex set $V(G)=\{x_1,\dots , x_n\}$ and edge set $E(G)$ consist of $2$-element subsets of $V(G)$. Given a subset $S\subseteq V(G)$, $G[S]$ will be the induced subgraph of $G$ on the vertex set $S$. For $t\geq 2$, the \emph{$t$-connected ideal} of $G$, denoted by $I_t(G)$, is a square-free monomial ideal defined by 
\[
I_t(G)\coloneq \l x_{i_1}\cdots x_{i_t}\mid G[\{x_{i_1},\dots , x_{i_t}\}] \text{ is connected}\r \subseteq R.
\]
The $t$-connected ideals can be regarded as the higher degree generalizations of classical edge ideals. The simplicial complex associated to $I_t(G)$ is called the \emph{$t$-independence complex.}

Our main object of study is the Alexander dual of $t$-connected ideals. Let $I$ be a square-free monomial ideal minimally generated by $\{ \mathbf{x}^{\mathbf{a}_1}, \dots , \mathbf{x}^{\mathbf{a}_r}\}$. The \emph{Alexander dual} of $I$, denoted by $I^{\vee}$, is defined as 
\[
I^{\vee}\coloneq \mathbf{m}_{\mathbf{a}_1}\cap \cdots \cap \mathbf{m}_{\mathbf{a}_r},
\]
where $\mathbf{m}_{\mathbf{a}_i}=\l x_j: x_j\mid \mathbf{x}^{\mathbf{a}_i}\r, 1\leq i\leq r$. Note that $I^{\vee}$ is again a square-free monomial ideal, whose minimal monomial generators correspond to the minimal vertex covers of the hypergraph associated with $I$. Hence, the Alexander dual of $I$ is often called the \emph{cover ideal} of $I$. Throughout this article, we write $J_t(G)\coloneqq I_t(G)^{\vee}$ for the cover ideal of the $t$-connected ideal of a graph $G$. If $x_{i_1}\cdots x_{i_r}\in \mathcal{G}(J_t(G))$, then the set $C=\{x_{i_1},\dots,x_{i_r}\}$ is a minimal vertex cover of the hypergraph associated with $I_t(G)$. We call $C$ a \emph{$t$-cover} of $G$. In the following, we recall the Simis property of the Alexander dual of edge ideals of graphs.

\begin{theorem}[{\cite[see Corollary 3.17, Theorem 4.6, Proposition 4.27]{2009Gitler-et.al}}]\label{thm:simis-coverideals-of-edgeideals}
    Let $G$ be a simple graph. Then, $J_2(G)$ is Simis if and only if $G$ is bipartite.
\end{theorem}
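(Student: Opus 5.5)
The plan is to prove the two implications separately, working throughout with the exponent description of the symbolic power. Since $J_2(G)=\bigcap_{\{x_i,x_j\}\in E(G)}\langle x_i,x_j\rangle$, the formula for symbolic powers of square-free monomial ideals recalled above gives $J_2(G)^{(s)}=\bigcap_{\{x_i,x_j\}\in E(G)}\langle x_i,x_j\rangle^s$. Consequently a monomial $\mathbf{x}^{\mathbf{a}}$ lies in $J_2(G)^{(s)}$ if and only if $a_i+a_j\geq s$ for every edge $\{x_i,x_j\}$. On the other side, $\mathbf{x}^{\mathbf{a}}\in J_2(G)^s$ if and only if $\mathbf{x}^{\mathbf{a}}$ is divisible by a product of $s$ (not necessarily distinct) vertex covers of $G$. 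It suffices to work with arbitrary covers, since every cover contains a minimal one.

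\emph{Bipartite implies Simis.} Let $V(G)=X\sqcup Y$ be a bipartition, and let $\mathbf{x}^{\mathbf{a}}\in J_2(G)^{(s)}$. For $r=1,\dots,s$ I will define
\[
C_r=\{x\in X: a_x\geq r\}\cup\{y\in Y: a_y\geq s-r+1\}.
\]

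First, each $C_r$ is a vertex cover. Let $\{x,y\}$ be an edge with $x\in X$ and $y\in Y$. If $x\notin C_r$, then $a_x\leq r-1$. The edge condition then forces $a_y\geq s-a_x\geq s-r+1$, so $y\in C_r$.

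Second, the product $\prod_{r=1}^s \mathbf{x}_{C_r}$ divides $\mathbf{x}^{\mathbf{a}}$. A vertex $x\in X$ belongs to exactly $\min(a_x,s)\leq a_x$ of the sets $C_r$. A vertex $y\in Y$ belongs to $C_r$ exactly when $r\geq s-a_y+1$, which again happens for $\min(a_y,s)\leq a_y$ values of $r$.

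Hence $\mathbf{x}^{\mathbf{a}}\in J_2(G)^s$. The reverse containment $J_2(G)^s\subseteq J_2(G)^{(s)}$ always holds, so $J_2(G)$ is Simis.

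\emph{Non-bipartite implies not Simis.} Suppose $G$ contains an odd cycle $C$ on $2k+1$ vertices. I will show that $J_2(G)^{(2)}\neq J_2(G)^2$ using the witness
\[
m=\prod_{v\in C}x_v\cdot\prod_{v\notin C}x_v^{2}.
\]

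To see $m\in J_2(G)^{(2)}$, consider an edge. If some endpoint lies outside $C$, that endpoint alone contributes exponent $2$. If both endpoints lie in $C$, they contribute $1+1=2$.

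Now suppose $m$ were divisible by $\mathbf{x}_{D_1}\mathbf{x}_{D_2}$ for two vertex covers $D_1,D_2$ of $G$. Each $D_i$ restricts to a vertex cover of the cycle $C$, and a cycle of length $2k+1$ needs at least $k+1$ vertices to cover it. So the total exponent of $\mathbf{x}_{D_1}\mathbf{x}_{D_2}$ on the vertices of $C$ is at least $2k+2$. But the total exponent of $m$ on these vertices is only $2k+1$, a contradiction. Thus $m\notin J_2(G)^2$.

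I expect no step to be a serious obstacle. The only real work is choosing the sets $C_r$ so that the cover condition and the exponent count hold simultaneously. The staggered thresholds, $r$ on $X$ and $s-r+1$ on $Y$, are exactly what uses bipartiteness, and this is where the argument fails for odd cycles.
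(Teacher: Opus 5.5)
Your proof is correct and complete, and it takes a genuinely different route from the paper. The paper gives no argument of its own for this statement. It quotes the result from Gitler--Reyes--Villarreal, where it comes out of a general theory of blowup algebras of square-free monomial ideals: there the equality $I^{(s)}=I^s$ for all $s$ is tied to normal torsion-freeness and to the max-flow min-cut (Mengerian) property of the associated clutter, and this is then specialized to the clutter of minimal vertex covers of a graph.

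You bypass all of that machinery and work directly with exponent vectors. The staggered thresholds $C_r$ give an explicit splitting of any $\mathbf{x}^{\mathbf{a}}$ with $a_i+a_j\geq s$ on every edge into a product of $s$ vertex covers. The odd-cycle witness $m$ then shows the failure already at $s=2$.

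The cited route buys generality, and it connects directly to the min-max equality $\tau_\alpha(B)=\nu_\alpha(B)$ behind the paper's closing application. Your route buys self-containedness, plus a useful byproduct. The same count shows that $C_{2k+1}$ has no two disjoint vertex covers, so $J_2(C_{2k+1})$, which has height $2$, is not K\"onig and hence not packed. That is exactly the fact the paper needs when it later invokes this theorem in Lemma~\ref{lem:cycle-not-packed} to conclude non-packedness. ``Not Simis'' alone would only give that conclusion through the open direction of the Conforti--Cornu\'ejols conjecture.
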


The following fact is well-known to experts (see, for instance, \cite{2015Dao-et.el}). After proving the equality of ordinary and symbolic powers, the following lemma is sufficient to conclude one direction of Conforti--Cornu\'{e}jols conjecture.

\begin{lemma}\label{lem: Simis implies packed}
    Let $I\subseteq R$ be a square-free monomial ideal. If $I$ is Simis then $I$ satisfies the packing property. 
\end{lemma}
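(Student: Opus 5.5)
The plan is to verify König-ness of every minor directly from $I^{(s)}=I^s$, using that the Simis property is preserved under taking minors. Write $\mathrm{ht}(I)=c$ and $\mathcal{G}(I)=\{m_1,\dots,m_r\}$.

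\textbf{Step 1: minors of Simis ideals are Simis.} Setting a variable $x_i$ to $0$ or to $1$ commutes with the formation of both ordinary and symbolic powers of square-free monomial ideals.

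For $x_i=1$ (localization at $x_i$), the primes of $I$ containing $x_i$ become the unit ideal. The minimal primes of $I|_{x_i=1}$ are among the remaining primes, which are exactly the minimal primes of $I$ not containing $x_i$. Hence
\[
I^{(s)}|_{x_i=1}=\bigcap_{x_i\notin P}P^s = (I|_{x_i=1})^{(s)},
\]
where the last equality uses that redundant primes do not change the intersection of $s$-th powers of monomial primes, since $P\subseteq Q$ implies $P^s\subseteq Q^s$. Also $I^s|_{x_i=1}=(I|_{x_i=1})^s$.

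For $x_i=0$ (restriction), a monomial lies in $I|_{x_i=0}$ exactly when it is not divisible by $x_i$ and lies in $I$. The minimal primes of the restriction are of the form $P\setminus\{x_i\}$ for minimal primes $P$ of $I$. So for monomials $m$ not divisible by $x_i$, membership of $m$ in $\bigcap P^s$ equals membership in $\bigcap (P\setminus\{x_i\})^s$. Similarly, $I^s|_{x_i=0}$ is generated by the products of generators avoiding $x_i$.

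Iterating over the chosen variables, every minor of a Simis ideal is again Simis.

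\textbf{Step 2: a Simis ideal is König.} Since minors of $I$ are Simis, it now suffices to show this. Let $c=\mathrm{ht}(I)$. Set $m=x_1\cdots x_n$ and consider $m\in I^{(c)}$: every minimal prime $P$ has $|P|\ge c$, and $m$ contains at least $c$ variables of $P$, so $m\in P^c$.

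By the Simis hypothesis $m\in I^c$, so $m$ is divisible by a product $m_{j_1}\cdots m_{j_c}$ of generators. Since $m$ is square-free, these $c$ generators have pairwise disjoint supports. Hence they form a regular sequence of monomials in $I$ of length $c$, and $I$ is König.

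One small point: for a minor that becomes the unit ideal or the zero ideal, König-ness is trivial or is handled by convention.

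\textbf{Expected main obstacle.} The delicate part is the bookkeeping in Step 1, namely checking that the minimal primes of the minor behave as claimed. For restriction, some primes $P\setminus\{x_i\}$ may become non-minimal, and some generators disappear. The symbolic-power description tolerates redundant components for the reason given above, and this observation is what makes the argument clean.
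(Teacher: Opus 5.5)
Your proof is correct. The paper gives no argument of its own for this lemma. It records the lemma as well known and points to \cite[Section 4]{2015Dao-et.el}, and your two steps are the standard direct proof of this direction.

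First, the Simis property passes to minors, because substituting $x_i=0$ or $x_i=1$ commutes with powers and with finite intersections of monomial ideals. For $x_i=1$ you leave the intersection part implicit. It follows from localizing at $x_i$ and then contracting, or from the fact that taking $\mathrm{lcm}$s of generators commutes with deleting the $x_i$-exponent. Second, $x_1\cdots x_n\in I^{(c)}=I^c$ with $c=\mathrm{ht}(I)$ produces $c$ generators with pairwise disjoint supports, which is König-ness.

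The other common route goes through the max-flow--min-cut characterization of Simis ideals, namely $\tau_\alpha=\nu_\alpha$ for all weights $\alpha$, as in the paper's closing section. There, König-ness of a minor is the instance with weight $0$ on variables set to $0$, a large weight on variables set to $1$, and weight $1$ elsewhere. Your argument avoids that theorem and is entirely elementary.
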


\section{Main Results}\label{sec: main}

In this section, we prove the main result of this article. Let $G$ be a connected simple graph. A vertex $v\in V(G)$ is called a \emph{cut vertex} if $G\setminus v$ is disconnected; otherwise, $v$ is called a \emph{non-cut vertex}. The following lemma is one of the crucial steps in the proof of the main theorem. 
\begin{lemma}\label{lem: main}
    Let $G$ be a graph, and $H\subseteq G$ be a connected induced subgraph of $G$ such that $|V(H)|=t+1$, and $H$ has exactly $r\geq 3$ non-cut vertices. Then 
    \begin{enumerate}
        \item $J_t(G)^{(r-1)}\neq J_t(G)^{r-1}$.
        \item $J_t(G)$ is not packed.
    \end{enumerate}
\end{lemma}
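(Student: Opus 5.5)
The plan for (1) is to build an explicit monomial that lies in $J_t(G)^{(r-1)}$ but not in $J_t(G)^{r-1}$. Let $v_1,\dots,v_r$ be the non-cut vertices of $H$, and write $W=V(G)\setminus V(H)$. The minimal primes of $J_t(G)$ are the ideals $P_A=\langle x_a : a\in A\rangle$, where $A$ runs over the $t$-subsets of $V(G)$ with $G[A]$ connected. Since $J_t(G)^{(s)}=\bigcap_A P_A^s$, membership in a symbolic power can be checked prime by prime.

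For each $i$, the graph $H\setminus v_i$ is connected on $t$ vertices, so $P_i:=P_{V(H)\setminus v_i}$ is a minimal prime. Conversely, let $A\subseteq V(H)$ be a connected $t$-set. Then $A=V(H)\setminus v$ for some $v$, and $H\setminus v$ is connected. Hence $v$ is a non-cut vertex, so $v=v_i$ and $P_A=P_i$. Every other minimal prime $P_A$ therefore contains some $x_w$ with $w\in W$.

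Set
\[
f \;=\; x_{v_1}\cdots x_{v_r}\,\prod_{w\in W} x_w^{\,r-1}.
\]
For each $i$, $P_i$ contains the $r-1$ variables $x_{v_j}$ with $j\neq i$, so $f\in P_i^{r-1}$. Every other minimal prime contains some $x_w$ with $w\in W$, and $x_w^{r-1}$ divides $f$. Hence $f\in J_t(G)^{(r-1)}$.

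Now suppose $f\in J_t(G)^{r-1}$. Then $f$ is divisible by a product $g_1\cdots g_{r-1}$ of minimal generators, that is, of $t$-covers $C_1,\dots,C_{r-1}$. Each $C_j$ must meet every $V(H)\setminus v_i$, because those sets index generators of $I_t(G)$. On the other hand, $C_j\cap V(H)\subseteq\{v_1,\dots,v_r\}$, since only these vertices of $H$ appear in $f$. A single vertex $v_k$ does not meet $V(H)\setminus v_k$, so each $C_j$ contains at least two of the $v_i$. The product $g_1\cdots g_{r-1}$ then has total degree at least $2(r-1)$ in $x_{v_1},\dots,x_{v_r}$. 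In $f$ this total degree is $r$, and $2(r-1)>r$ for $r\ge 3$. This contradiction gives $f\notin J_t(G)^{r-1}$, proving (1).

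For (2) I would exhibit a minor of $J_t(G)$ that is not K\"onig. First, set $x_w=1$ for all $w\in W$. This kills every minimal prime containing an outside variable, leaving $J'=P_1\cap\cdots\cap P_r$. Second, set to $0$ the variables of the cut vertices of $H$. Each $P_i$ contains all of these variables, so each $P_i$ becomes $Q_i=\langle x_{v_j}: j\neq i\rangle$. The resulting ideal is $\bigcap_i Q_i$, which is generated by all squarefree products $x_{v_j}x_{v_k}$ with $j\neq k$. Its height is $r-1$. A regular sequence of monomials of length $r-1$ would require $r-1$ pairwise disjoint degree-$2$ generators on $r$ variables, which is impossible for $r\ge3$. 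So this minor is not K\"onig, and $J_t(G)$ is not packed.

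The main point to verify carefully is the behaviour of minors under these substitutions. Setting variables to $1$ deletes exactly the primes containing them. Setting variables to $0$ removes those variables from each remaining prime, and here no prime becomes zero since $r-1\ge 2$. These are the standard descriptions of deletion and contraction for intersections of monomial primes, so I expect this step to be routine. The remaining combinatorial inputs are the identification of the relevant primes and the counting argument, both done above.
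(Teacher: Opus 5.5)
Your proof is correct and follows essentially the same route as the paper. Both arguments isolate the $r$ primes $P_i=\langle x_v : v\in V(H)\setminus v_i\rangle$, observe that their intersection is the edge ideal of $K_r$ (plus the cut-vertex variables), and finish with a degree/matching count. The differences are cosmetic: where the paper localizes at $\langle x_v : v\in V(H)\rangle$, you make the witness global by multiplying by $\prod_{w\in W}x_w^{r-1}$; and in (2) you also set the cut-vertex variables to $0$ to get exactly $I(K_r)$, instead of using the paper's bound $t+1-r+\lfloor r/2\rfloor<t$.
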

\begin{proof}
    (1) Without any loss of generality, assume that $V(H)=\{x_1,\dots , x_{t+1}\}$, and among them, the non-cut vertices are $\{x_1,\dots , x_r\}$. Consider the prime ideal $P=\l x_1,x_2,\dots , x_{t+1}\r$, and take $J_t(G)R_P$. We note that 
    \[
    J_t(G)_P\coloneq J_t(G)R_P=\bigcap_{i=1}^{r} \l x_1,\dots ,x_{i-1}, \widehat{x_i}, x_{i+1},\dots x_{t+1} \r R_p.
    \]
    If we set $f=\prod_{i=1}^{r}x_i$, then $f\in \l x_1,\dots ,x_{i-1}, \hat{x_i}, x_{i+1},\dots x_{t+1} \r^{r-1}$ for all $1\leq i\leq r$, and hence, $f\in J_t(G)_P^{(r-1)}$. On the other hand, it is straightforward to see that 
    \[
    J_t(G)_P=\l x_ix_j\mid 1\leq i<j\leq r\r R_P +\l x_{r+1},\dots , x_{t+1}\r R_P.
    \]
    Since $r\geq 3$, by comparing degrees of $f$ and minimal generating set of $J_t(G)_P$ we have $f\notin J_t(G)_P^{r-1}$. Therefore, $J_t(G)_P^{(r-1)}\neq J_t(G)_P^{r-1}$, and hence, $J_t(G)^{(r-1)}\neq J_t(G)^{r-1}$ as desired.

    \medskip
    \noindent
    (2) To see that $J_t(G)$ is not packed, consider the ideal $I$ obtained by putting $x_i=1$ for all $i \notin V(H)$. Then we have, 
    \begin{align*}
        I&=\bigcap_{i=1}^{r} \l x_1,\dots ,x_{i-1}, \widehat{x_i}, x_{i+1},\dots x_{t+1} \r\\
        &= \l x_ix_j\mid 1\leq i<j\leq r\r +\l x_{r+1},\dots , x_{t+1}\r.
    \end{align*}
    Note that $\mathrm{height}(I)=t$, and $I$ has monomial regular sequence of length at most $t+1-r+\lfloor \frac{r}{2} \rfloor=t+1-\lceil \frac{r}{2} \rceil <t$, as $r\geq 3$.
\end{proof}

It is a well-known fact that any connected graph with $n\geq 2$ vertices has at least two non-cut vertices. Moreover, the only graphs with exactly two non-cut vertices are paths. Therefore, in view of the above Lemma, if $J_t(G)$ is Simis, or if $J_t(G)$ is packed, then any connected induced subgraph of $G$ on $t+1$ vertices must be a path. This suggests that the possible graphs $G$ for which $J_t(G)$ is Simis (or packed) are paths and cycles.

We begin by considering the case when $G$ is a path. Let $P_n$ denote the path graph on $n$ vertices, where 
\begin{align*}
    V(P_n)&=\{x_1,\dots , x_n\},\\
    E(P_n)&=\{x_ix_{i+1}\mid 1\leq i\leq n-1\}.
\end{align*}
We show that $J_t(G)$ is both Simis and packed for all $t\geq 1$.
The following proposition gives a complete description of the minimal generating set of the cover ideal of the $t$-connected ideal of $P_n$.

\begin{proposition}\label{prop: mingens of J_t(P_n)}
    For $1\leq i_1<i_2<\cdots < i_r\leq n$, $\mathbf{x}^{\alpha}=x_{i_1}\cdots x_{i_r}\in \mathcal{G}(J_t(P_n))$ if and only if the following holds:
    \begin{enumerate}
        \item $r\geq \lfloor \frac{n}{t} \rfloor$;
        \item $i_1\leq t, i_2\geq t+1$;
        \item $i_r\geq n-t+1, i_{r-1}\leq n-t$;
        \item $i_{j+1}-i_{j}\leq t$ for all $1\leq j \leq r-1$;
        \item $i_{j+2}-i_{j}\geq t+1$ for all $1\leq j\leq r-2$.
    \end{enumerate}
\end{proposition}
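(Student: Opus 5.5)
The plan is to translate membership in $\mathcal{G}(J_t(P_n))$ into a purely combinatorial statement about subsets of $\{1,\dots,n\}$ and then check the two directions separately. Since $P_n$ is a path, the connected induced subgraphs on $t$ vertices are exactly the windows $W_k=\{x_k,x_{k+1},\dots,x_{k+t-1}\}$ for $1\le k\le n-t+1$. Hence $\mathcal{G}(I_t(P_n))=\{x_k\cdots x_{k+t-1}\}$. By the definition of the Alexander dual, $x_{i_1}\cdots x_{i_r}\in\mathcal{G}(J_t(P_n))$ if and only if $C=\{i_1<\dots<i_r\}$ meets every window $W_k$ and no proper subset of $C$ does. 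I will use the boundary conventions $i_0=0$ and $i_{r+1}=n+1$. With these, (2)--(5) become uniform gap conditions, and I will read (2) and (3) with these conventions when $r$ is small.

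\emph{Covering step.} I will show that $C$ meets all windows if and only if $i_1\le t$, $i_r\ge n-t+1$, and $i_{j+1}-i_j\le t$ for all $j$. In the compact form this reads $i_{j+1}-i_j\le t$ for $0\le j\le r$. If some gap satisfies $i_{j+1}-i_j\ge t+1$, then the window $W_{i_j+1}$ lies strictly between $i_j$ and $i_{j+1}$ and misses $C$. Conversely, take any window $W_k=[k,k+t-1]$. Choose $j$ maximal with $i_j<k$, so $0\le j\le r$ by the convention $i_0=0<k$. Then $i_{j+1}\ge k$, and $i_{j+1}\le i_j+t\le k-1+t$. This gives $i_{j+1}\in W_k$, and $i_{j+1}\le n$ is automatic because $k+t-1\le n$. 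This step yields (4), the first halves of (2) and (3), and it also yields (1). Indeed, the windows $W_1,W_{t+1},\dots,W_{(\lfloor n/t\rfloor-1)t+1}$ are pairwise disjoint, so any cover has at least $\lfloor n/t\rfloor$ elements.

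\emph{Minimality step.} A cover $C$ is minimal if and only if each $i_j$ is irreplaceable, meaning that $C\setminus\{i_j\}$ is not a cover. By the covering criterion applied to $C\setminus\{i_j\}$, removing $i_j$ merges the two gaps adjacent to $i_j$ into the single gap $i_{j+1}-i_{j-1}$. All other gaps are unchanged. So $C\setminus\{i_j\}$ fails to be a cover exactly when $i_{j+1}-i_{j-1}\ge t+1$. Running over $1\le j\le r$ gives three cases:
\begin{itemize}
\item for $j=1$, the condition is $i_2\ge t+1$;
\item for $j=r$, the condition is $i_{r-1}\le n-t$;
\item for $2\le j\le r-1$, the condition is (5).
\end{itemize}
Since minimality of a cover among square-free monomials only needs to be tested on single-element deletions, this completes both directions. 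Necessity follows from the two steps together with the disjoint-window count for (1), and sufficiency is exactly the two criteria.

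The only delicate point I expect is the bookkeeping at the ends. This includes the degenerate cases $r=1$ and $r=2$, where indices like $i_2$ or $i_{r-1}$ coincide with the boundary conventions. It also includes confirming that the merged-gap criterion is valid when $j=1$ or $j=r$, which amounts to checking that the window $W_{i_{j-1}+1}$ fits inside $\{1,\dots,n\}$. I will handle this by stating the covering criterion once in the convention form $i_{j+1}-i_j\le t$ for $0\le j\le r$, and then deriving everything from it.
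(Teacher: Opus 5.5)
Your proposal is correct and follows the same route as the paper. Conditions (2)--(4) say that $C$ meets every window of $t$ consecutive vertices, and the second halves of (2), (3), together with (5), say that deleting any single $x_{i_j}$ destroys the cover; your boundary conventions $i_0=0$, $i_{r+1}=n+1$ and the disjoint-window bound for (1) also make explicit the converse direction and the degenerate cases $r\le 2$, which the paper leaves as ``easy to verify.''
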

\begin{proof}
    Assume that $\mathbf{x}^{\alpha}=x_{i_1}\cdots x_{i_r}$ satisfies the given conditions. Observe that every connected induced subgraph of $P_n$ is itself a path. The conditions $(2)-(4)$ imply that after removing the vertices $x_{i_1}, \dots , x_{i_r}$ from the path $P_n$, the resulting graph does not contain any path of length $t$. In other words, the set $C=\{x_{i_1}, \dots , x_{i_r}\}$ is a $t$-cover of $P_n$. Now, to see the minimality of this $t$-cover, we will show that $C'=C\setminus x_{i_k}$ cannot form a $t$-cover of $P_n$ for any $1\leq k\leq r$. If $k=1$, or if $k=r$, then it follows from the conditions $(2)$ and $(3)$ respectively that $C'$ is not a $t$-cover of $P_n$. On the other hand, if $2\leq k\leq r-1$, then the condition $(5)$ implies that $C'$ is again not a $t$-cover of $P_n$.

    For the converse part, let us assume that $\mathbf{x}^{\alpha}=x_{i_1}\cdots x_{i_r}\in \mathcal{G}(J_t(P_n))$. Then the set of vertices $\{x_{i_1}, \dots , x_{i_r}\}$ corresponds to a minimal $t$-cover of $P_n$. It is now easy to verify that any minimal $t$-cover satisfies the conditions given in $(1)-(5)$.
\end{proof}

\begin{proposition}\label{prop: Symbolic Prop for P_n}
    For $n\geq t\geq 2$, we have $J_t(P_n)^{(s)}= J_t(P_n)^{s}$ for all $s\geq 1$. In particular, $J_t(P_n)$ has the packing property. 
\end{proposition}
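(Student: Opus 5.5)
The plan is to realize $J_t(P_n)$ as the cover ideal of a hypergraph whose incidence matrix is totally unimodular (in fact an interval matrix), and then invoke the standard fact that a square-free monomial ideal whose minimal primes come from a hypergraph with the max-flow-min-cut/integer rounding property is Simis.

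Concretely, $J_t(P_n)=\bigcap_{j=1}^{n-t+1}\langle x_j,x_{j+1},\dots,x_{j+t-1}\rangle$, since the connected induced subgraphs on $t$ vertices of $P_n$ are exactly the windows $\{x_j,\dots,x_{j+t-1}\}$. Thus the associated primes of $J_t(P_n)$ correspond to intervals of consecutive variables. Write $A$ for the $(n-t+1)\times n$ matrix whose rows are the indicator vectors of these intervals; $A$ has the consecutive-ones property in each row.

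\textbf{First step.} Observe that the symbolic power is
\[
J_t(P_n)^{(s)}=\bigcap_j \langle x_j,\dots,x_{j+t-1}\rangle^s,
\]
so a monomial $\mathbf{x}^{\mathbf{b}}$ lies in $J_t(P_n)^{(s)}$ iff $A\mathbf{b}\ge s\mathbf{1}$. The generators of $J_t(P_n)$ are the minimal $0/1$ vectors $\mathbf{c}$ with $A\mathbf{c}\ge \mathbf{1}$ (described explicitly in Proposition~\ref{prop: mingens of J_t(P_n)}, though we need not use the explicit list). The goal is to show that every integer $\mathbf{b}\ge 0$ with $A\mathbf{b}\ge s\mathbf{1}$ decomposes as $\mathbf{b}\ge \mathbf{c}_1+\cdots+\mathbf{c}_s$ with each $\mathbf{c}_i$ a $0/1$ vector satisfying $A\mathbf{c}_i\ge \mathbf{1}$.

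\textbf{Key step.} This decomposition is the integer decomposition property of the polyhedron $\{\mathbf{y}\ge 0: A\mathbf{y}\ge \mathbf{1}\}$, which holds because $A$ is an interval matrix and hence totally unimodular. Then $\{\mathbf{y}\ge 0: A\mathbf{y}\ge \mathbf{1}\}$ has the integer decomposition property, by Baum--Trotter: TU matrices give polyhedra with IDP. Alternatively, I would give a direct greedy argument. Scan left to right and peel off one cover $\mathbf{c}$ with $\mathbf{c}\le \mathbf{b}$ such that $A(\mathbf{b}-\mathbf{c})\ge (s-1)\mathbf{1}$. To build $\mathbf{c}$, repeatedly pick the rightmost position $k$ in the current first uncovered window $W$ that has $b_k>0$, with priority to keep "tight" windows (those with sum exactly $s$) covered exactly once. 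The main obstacle is verifying that this greedy never subtracts twice from a tight window; the interval structure should make this an exchange argument. Because of that, I would rather cite total unimodularity and the known equivalence (Gitler--Reyes--Villarreal / Herzog--Hibi--Trung--Zheng) that the cover ideal, or more precisely the ideal of a clutter whose incidence matrix is TU, is normally torsion-free, i.e.\ Simis.

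\textbf{Caution and conclusion.} One subtlety needs care: Simis here concerns $J_t$ with primes given by rows of $A$, so the relevant statement is that the ideal $\bigcap_j\langle x_i: A_{ji}=1\rangle$ is normally torsion-free whenever $A$ is TU. This is precisely the Gitler--Valencia--Villarreal MFMC characterization applied to the clutter of windows; TU implies MFMC. Finally, the packing property follows immediately from Lemma~\ref{lem: Simis implies packed}.
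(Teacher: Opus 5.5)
Your argument is correct, and it takes a genuinely different route from the paper.

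\textbf{The paper's route.} It proves $J_t(P_n)^{(s)}\subseteq J_t(P_n)^s$ by an elementary induction on $n+s$. It writes $J_t(P_n)^{(s)}=\langle x_1,\dots,x_t\rangle^s\cap J_t(P_n\setminus x_1)^{(s)}$ and uses the induction hypothesis to split off a minimal generator $g$ of $J_t(P_n\setminus x_1)$ from $f$. It then repairs $g$ into a $t$-cover of $P_n$ (namely $g$, $x_1g$ or $x_\ell g$), using the explicit list of minimal $t$-covers in Proposition~\ref{prop: mingens of J_t(P_n)}. Finally it checks by hand that the quotient stays in the $(s-1)$-st symbolic power.

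\textbf{Your route.} You reduce everything to the integer decomposition property of $Q=\{\mathbf y\ge 0: A\mathbf y\ge \mathbf 1\}$. You get this from Baum--Trotter, since the interval matrix $A$, and hence $A$ stacked on the identity, is totally unimodular. You should add the one-line remark that an integral point $\mathbf c\in Q$ can be replaced by the coordinatewise minimum $\min(\mathbf c,\mathbf 1)$. This is still a cover because $A$ is a $0/1$ matrix, so it is a $0/1$ cover.

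\textbf{What each buys.} Your route is shorter and needs no list of minimal covers. It also shows at once that the cover ideal of any clutter with totally unimodular incidence matrix is Simis. The paper's peel-and-repair argument is self-contained, and it is exactly what the paper reuses for $J_3(C_6)$, $J_3(C_9)$ and $J_4(C_8)$. Your method cannot reach those cases, because the corresponding circular window matrices are not totally unimodular. For example, for $C_6$ with $t=3$, the windows $\{x_1,x_2,x_3\},\{x_3,x_4,x_5\},\{x_5,x_6,x_1\}$ restricted to the columns $x_1,x_3,x_5$ give a submatrix of determinant $2$.

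\textbf{One correction to your last paragraph.} The Gitler--Valencia--Villarreal theorem applied to the clutter of windows says that its \emph{edge} ideal $I_t(P_n)$ is normally torsion-free. It says nothing directly about the cover ideal $J_t(P_n)$. For $J_t(P_n)$ you would need MFMC for the blocker, i.e.\ the clutter of minimal $t$-covers. Its incidence matrix is not $A$ and need not be totally unimodular. For instance, for $t=2$ and $n=6$, the minimal covers $\{x_2,x_3,x_5\},\{x_2,x_4,x_5\},\{x_1,x_3,x_4,x_6\}$ restricted to $x_2,x_3,x_4$ give determinant $-2$. So the Baum--Trotter argument is what actually carries your proof; drop the MFMC attribution rather than rely on it.
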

\begin{proof} 
We only need to show that $J_t(P_n)^{(s)}\subseteq J_t(P_n)^{s}$ for all $s\geq 1$. We proceed by induction on $n+s$. Note that the assertion is always true if $s=1$, or if $n=t$. For the later case, $J_t(P_n)$ is the prime ideal generated by the variables, and hence a complete intersection ideal. We now assume that $n+s>t+1$. Let $f\in J_t(P_n)^{(s)}$. We can write $J_t(P_n)^{(s)}= \l x_1,\ldots ,x_t\r^s\cap J_t(P_n\setminus x_1)^{(s)}$. Then we have $f\in J_t(P_n\setminus x_1)^{(s)}=J_t(P_n\setminus x_1)^{s}$, where the last equality follows from the induction hypothesis. Consequently, there is some $g\in \mathcal{G}(J_t(P_n\setminus x_1))$ such that $g\mid f$ and $\frac{f}{g}\in J_t(P_n\setminus x_1)^{s-1}$. We divide the proof into two cases:
	
	\noindent
	\textsc{Case 1}: Assume that $x_1\mid f$.  We further consider the following subcases:
	
	\noindent
	\textsc{Subcase 1(A)}: Assume that $\mathrm{Supp}(g)\cap \{x_2, \dots , x_t\}\neq \emptyset$. Since $g\in \mathcal{G}(J_t(P_n\setminus x_1))$, it follows from Proposition~\ref{prop: mingens of J_t(P_n)}(2) that $|\mathrm{Supp}(g)\cap \{x_2, \dots , x_t\}|=1$, and hence, $g\in \mathcal{G}(J_t(P_n))$. Also, since $x_1\nmid g$, we have $\frac{f}{g}\in \l x_1,\dots ,x_t\r^{s-1}$. Moreover, $\frac{f}{g}\in J_t(P_n\setminus x_1)^{s-1}=J_t(P_n\setminus x_1)^{(s-1)}$ by the induction hypothesis. This implies that $\frac{f}{g}\in  \l x_1, \dots ,x_t\r^{s-1}\cap J_t(P_n\setminus x_1)^{(s-1)}= J_t(P_n)^{(s-1)}=J_t(P_n)^{s-1}$, where the last equality again follows from the induction hypothesis. Finally, since $g\in \mathcal{G}(J_t(P_n))$ we have $f\in J_t(P_n)^{s}$.
	
	\noindent
	\textsc{Subcase 1(B)}: Assume that $\mathrm{Supp}(g)\cap \{x_2, \dots, x_t\}= \emptyset$. Then by Proposition~\ref{prop: mingens of J_t(P_n)}, $x_{t+1}\mid g$. In this case, we take $g'=x_1g$. Since $x_1\mid f$, $g'\mid f$, and moreover, $\frac{f}{g'}\in \l x_1,\dots , x_t\r^{s-1}$. On the other hand, since $\frac{f}{g}\in  J_t(P_n\setminus x_1)^{s-1}$, we have  $\frac{f}{g'}\in  J_t(P_n\setminus x_1)^{s-1}= J_t(P_n\setminus x_1)^{(s-1)}$, where the last equality uses the induction hypothesis. Hence, we get $\frac{f}{g'}\in \l x_1,\dots , x_t\r^{s-1}\cap J_t(P_n\setminus x_1)^{(s-1)}= J_t(P_n)^{(s-1)}= J_t(P_n)^{s-1}$, where the last equality again follows from the induction hypothesis. By Proposition~\ref{prop: mingens of J_t(P_n)}, $g'\in J_t(P_n)$, and therefore, $f\in J_t(P_n)^{s}$ as desired.
	
	\medskip
	\noindent
	\textsc{Case 2}: Assume that $x_1\nmid f$. If $\mathrm{Supp}(g)\cap \{x_2,\ldots ,x_t\}\neq \emptyset$, then by similar arguments as in Subcase 1(A), we can conclude that $g\in J_t(P_n)$, and $\frac{f}{g}\in J_t(P_n)^{s-1}$. Thus, we have $f\in J_t(P_n)^{s}$. Finally, assume that $\mathrm{Supp}(g)\cap \{x_2,\dots ,x_t\}= \emptyset$. Since $x_1\nmid f$ and $f\in \l x_1,\dots ,x_t\r^s$, there are integers $a_i\geq 0, 2\leq i\leq t$ such that $x_2^{a_2}x_3^{a_3}\cdots x_{t}^{a_t}\mid f$ and $\sum_{i=2}^ta_i=s$. Now, consider the smallest $\ell, 2\leq \ell \leq t$ such that $a_{\ell}\neq 0$, and take $g'=x_{\ell}g$. Note that, by Proposition~\ref{prop: mingens of J_t(P_n)} we have $g'\in J_t(P_n)$. Further, since $\mathrm{Supp}(g)\cap \{x_1,\dots ,x_t\}= \emptyset$, and $f\in J_t(P_n)^{(s)}$, we have $\frac{f}{g'}\in \l x_i, x_{i+1},\dots , x_{i+t-1}\r^{s-1}$ for all $1\leq i\leq \ell$. Moreover, since $\frac{f}{g}\in J_t(P_n\setminus x_1)^{(s-1)}\subseteq J_t(P_n\setminus \{x_1,\dots , x_{\ell}\})^{(s-1)}$, we have $\frac{f}{g'}\in J_t(P_n\setminus \{x_1,\dots , x_{\ell}\})^{(s-1)}$. Therefore, $\frac{f}{g'}\in \bigcap_{i=1}^{\ell} \l x_i, x_{i+1},\dots , x_{i+t-1}\r^{s-1} \cap J_t(P_n\setminus \{x_1,\dots , x_{\ell}\})^{(s-1)}= J_t(P_n)^{(s-1)}= J_t(P_n)^{s-1}$, where the last equality follows from the induction hypothesis. Since $g'\in J_t(P_n)$, it follows that $f\in J_t(P_n)^s$, and this completes the proof.
\end{proof}

Next, we consider the $t$-connected ideals of cycles. Let $C_n$ denote the cycle on $n$ vertices,~ where 
\begin{align*}
    V(C_n)&=\{x_1,\dots , x_n\},\\
    E(C_n)&=\{x_ix_{i+1}\mid 1\leq i\leq n-1\}\cup \{x_1x_n\}.
\end{align*}
The following proposition provides a complete description of minimal generating set of~ $J_t(C_n)$.

\begin{proposition}\label{prop: mingens of J_t(C_n)}
	For $1\leq i_1<i_2<\cdots < i_r\leq n$, $\mathbf{x}^{\alpha}=x_{i_1}\cdots x_{i_r}\in \mathcal{G}(J_t(C_n))$ if and only if the following holds:
	\begin{enumerate}
		\item $r\geq \lceil \frac{n}{t} \rceil$;
		\item $i_{j+1}-i_{j}\leq t$ for all $1\leq j \leq r-1$, $n+i_{1}-i_{r}\leq t$;
		\item $i_{j+2}-i_{j}\geq t+1$ for all $1\leq j\leq r-2$, $n+i_{1}-i_{r-1}\geq t+1$, $n+i_{2}-i_{r}\geq t+1$.
	\end{enumerate}
\end{proposition}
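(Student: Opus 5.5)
The plan is to follow the pattern of Proposition~\ref{prop: mingens of J_t(P_n)}, translating "minimal $t$-cover" into conditions on cyclic gaps. The basic observation is that the connected induced subgraphs of $C_n$ on $t$ vertices (for $t<n$) are exactly the $n$ arcs $\{x_i,x_{i+1},\dots,x_{i+t-1}\}$, indices taken mod $n$. So a set $C=\{x_{i_1},\dots,x_{i_r}\}$ is a $t$-cover precisely when every such arc meets $C$. Equivalently, each maximal run of consecutive vertices of $C_n\setminus C$ has fewer than $t$ vertices.

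\textbf{Covering.} The runs of uncovered vertices lie strictly between cyclically consecutive elements of $C$. They have sizes $i_{j+1}-i_j-1$ for $1\le j\le r-1$, together with the wrap-around run of size $n+i_1-i_r-1$. Requiring each to be at most $t-1$ is exactly condition (2). Summing the $r$ cyclic gaps gives $n=\sum(\text{gaps})\le rt$, hence $r\ge\lceil n/t\rceil$, which is condition (1). So (1) is a consequence of (2).

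\textbf{Minimality.} Removing $x_{i_j}$ from $C$ merges the two runs adjacent to it into a single run of size $(\text{next}-\text{previous})-1$, where next and previous are the cyclic neighbours of $i_j$ in $C$. Hence $C\setminus\{x_{i_j}\}$ fails to be a cover if and only if this merged run has at least $t$ vertices, i.e. the cyclic distance from the predecessor to the successor of $i_j$ is at least $t+1$.
\begin{itemize}
\item For an interior index $2\le j\le r-1$, this reads $i_{j+1}-i_{j-1}\ge t+1$.
\item For $j=1$ the neighbours are $i_r$ and $i_2$, giving $n+i_2-i_r\ge t+1$.
\item For $j=r$ the neighbours are $i_{r-1}$ and $i_1$, giving $n+i_1-i_{r-1}\ge t+1$.
\end{itemize}
Together these are exactly condition (3).

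\textbf{Both directions.} If (2) and (3) hold, then $C$ is a cover and no proper subset obtained by deleting one vertex is a cover. Covers are closed under supersets, so $C$ is a minimal cover, and $\mathbf{x}^\alpha\in\mathcal{G}(J_t(C_n))$. Conversely, a minimal generator corresponds to a minimal cover, so (2) and (3) hold by the equivalences above, and (1) follows from (2).

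\textbf{Main obstacle.} The delicate points are the degenerate cases rather than the generic argument.
\begin{itemize}
\item When $n=t$, the only $t$-subset is $V(C_n)$ itself, so the arc description still works and every single vertex is a cover. This agrees with the conditions for $r=1$ if the wrap gap is read as $n+i_1-i_1=n$.
\item When $r\le 2$, the wrap-around conditions in (3) must be interpreted correctly. For $r=2$, removing one vertex leaves a single run of size $n-1$, so minimality means $n-1\ge t$. I would check separately that the stated inequalities reproduce this.
\item For $n<t$ there is nothing to cover, so I would assume $n\ge t$ throughout.
\end{itemize}
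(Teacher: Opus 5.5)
Your proof is correct and follows the same route as the paper. Condition (2) says every cyclic run of vertices outside $C$ has fewer than $t$ vertices, condition (3) says deleting any chosen vertex merges its two neighbouring runs into one of size at least $t$, and (1) follows by summing the cyclic gaps. Your write-up is more careful than the paper's sketch: you note that (1) is a consequence of (2), and you check the wrap-around inequalities in (3), including $r=2$, where both reduce to $n\ge t+1$. For $r=1$, which forces $n=t$, condition (3) must be read as vacuous.
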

\begin{proof}
Assume that $\mathbf{x^{\alpha}} = x_{i_1}\ldots x_{i_r}$ satisfies the given conditions. Condition $(2)$ implies that after removing the vertices $x_{i_1},\ldots,x_{i_r}$ from $C_n$, the resulting graph does not contain any path of length $t$. In other words, the set $C = \{x_{i_1},\ldots,x_{i_r}\}$ is a $t$-cover of $C_n$. To see the minimality of the $t$-cover, we take $C' = C\setminus \{x_{i_k}\}$, where $1\leq k\leq r$. Indeed, it follows from condition (3) that $C'$ is not a $t$-cover of $C_n$. Hence, $C$ is a minimal $t$-cover of $C_n$, and therefore $\mathbf{x}^{\alpha}\in \mathcal{G}(J_t(G))$.

Conversely, suppose that $\mathbf{x}^{\alpha}=x_{i_1}\cdots x_{i_r}\in \mathcal{G}(J_t(C_n))$. Then the set $\{x_{i_1},\dots,x_{i_r}\}$ corresponds to a minimal $t$-cover of the cycle $C_n$. Since any gap of more than $t$ vertices between consecutive chosen vertices would contain a $t$-path, the distances between consecutive indices must satisfy the inequalities in $(2)$. Moreover, minimality of the cover implies that removing any vertex must create a segment of length at least $t$, which yields the inequalities in $(3)$. Finally, covering a cycle of length $n$ by gaps of size at most $t$ requires at least $\lceil n/t\rceil$ vertices, giving condition $(1)$. TTherefore, conditions $(1)-(3)$ hold.
\end{proof}

\begin{proposition}\label{prop:Konig}
    $J_t(C_n)$ is K\"onig if and only if $n=t\ell $ for some $\ell\geq 1$.
\end{proposition}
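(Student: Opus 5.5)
Throughout, $t\ge 2$ and we assume $n\ge t$, so that $C_n$ has connected subsets of size $t$. The plan is to compare the height of $J_t(C_n)$ with the maximum number of pairwise disjoint minimal generators, i.e.\ pairwise disjoint minimal $t$-covers. For a square-free monomial ideal, a monomial regular sequence is the same as a set of monomials with pairwise disjoint supports, and we may take them to be minimal generators. So König means: the maximal number of pairwise disjoint minimal $t$-covers equals $\mathrm{height}(J_t(C_n))$.

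First I compute the height. The minimal primes of $J_t(C_n)=I_t(C_n)^{\vee}$ are the primes $\langle x_i,x_{i+1},\dots,x_{i+t-1}\rangle$ (indices mod $n$), one for each connected $t$-subset of $C_n$. If $n>t$ these $t$-arcs are exactly the connected $t$-subsets; if $n=t$ the only one is $V(C_n)$. In both cases every minimal prime has height $t$, so $\mathrm{height}(J_t(C_n))=t$.

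Next, the upper bound. Suppose $C^{(1)},\dots,C^{(m)}$ are pairwise disjoint $t$-covers. Each $C^{(j)}$ meets every $t$-arc, so by Proposition~\ref{prop: mingens of J_t(C_n)}(1) it has $|C^{(j)}|\ge \lceil n/t\rceil$. Disjointness gives $m\lceil n/t\rceil\le n$. Separately, fixing any one $t$-arc, each $C^{(j)}$ meets it, and disjointness gives $m\le t$.
\begin{itemize}
\item If $t\nmid n$, write $n=qt+\rho$ with $0<\rho<t$. If $n<t$ there is no cover at all, which is excluded. If $n>t$, then $\lceil n/t\rceil=q+1$. Having $m=t$ disjoint covers would need $t(q+1)\le n=qt+\rho$, i.e.\ $t\le\rho$, which is impossible. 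So the maximum is $<t=\mathrm{height}$, and $J_t(C_n)$ is not König.
\item If $n=t\ell$, I construct $t$ disjoint covers explicitly: for $a=1,\dots,t$ put
\[
C^{(a)}=\{x_a,x_{a+t},x_{a+2t},\dots,x_{a+(\ell-1)t}\}.
\]
Consecutive indices differ by exactly $t$ cyclically, since $n+a-(a+(\ell-1)t)=t$. So condition (2) of Proposition~\ref{prop: mingens of J_t(C_n)} holds. For condition (3), the gaps between indices two apart are $2t\ge t+1$. Hence each $C^{(a)}$ is a minimal $t$-cover. The covers partition $V(C_n)$, so they give a monomial regular sequence of length $t$, and $J_t(C_n)$ is König.
\end{itemize}

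Some small cases need checking. When $\ell=1$ ($n=t$), each $C^{(a)}=\{x_a\}$, and $J_t(C_t)=\langle x_1,\dots,x_t\rangle$ is trivially König. When $\ell=2$, condition (3) of Proposition~\ref{prop: mingens of J_t(C_n)} involves $r=2$, and the cyclic conditions read $n+i_1-i_1\ge t+1$, which holds since $n=2t$. In these degenerate cases I would verify minimality directly rather than through the proposition.

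The main obstacle is the upper-bound step, and it is handled by the two-sided counting above: the lower bound on cover size gives $m\lceil n/t\rceil\le n$, and a single $t$-arc gives $m\le t$. The care needed is to use $\lceil n/t\rceil$ correctly and to treat the boundary cases $n=t$ and $n<2t$ by hand.
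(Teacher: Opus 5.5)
Your proof is correct and follows essentially the same route as the paper: the height is $t$, the explicit disjoint covers $\{x_a,x_{a+t},\dots,x_{a+(\ell-1)t}\}$ handle $n=t\ell$, and the count $t\lceil n/t\rceil>n$ rules out $t$ pairwise disjoint covers when $t\nmid n$. Two of your steps are harmless additions that the paper leaves implicit: the extra bound $m\le t$ coming from a single $t$-arc, and your direct checks of the degenerate cases $\ell=1,2$.
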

\begin{proof}
  Assume that $n=t\ell$ for some $\ell\geq 1$. By Proposition~\ref{prop: mingens of J_t(C_n)}, $f_i\coloneqq x_ix_{i+t}\cdots x_{i+(\ell-1)t}\in J_t(C_n)$ for all $1\leq i\leq t$. Note that $\mathrm{height}(J_t(C_n))=t$ and $f_1,\dots , f_t$ forms a monomial regular sequence. Therefore, $J_t(C_n)$ is K\"onig. Conversely, assume that $n=t\ell + m$ for some $1\leq m\leq t-1$. If possible, let us assume that there is a monomial regular sequence $g_1, \dots , g_t\in \mathcal{G}(J_t(C_n))$. By Proposition~\ref{prop: mingens of J_t(C_n)}, $\mathrm{deg}(g_i)\geq \lceil\frac{n}{t}\rceil=\ell +1$ for all $1\leq i\leq t$. Since $\mathrm{Supp}(g_i)\cap \mathrm{Supp}(g_j)=\emptyset$ for all $i\neq j\in [t]$, we have $\sum_{i=1}^{t}|\mathrm{Supp}(g_i)|\geq t(\ell +1)=t\ell +t>n$, a contradiction. Therefore, $J_t(C_n)$ is not K\"onig.  
\end{proof}

\begin{lemma}\label{lem:cycle-not-packed}
    Let $n>t$ be positive integers such that $(n,t) \notin \{(6,3),(9,3), (8,4)\}$. Then $J_t(C_n)$ is not packed.    
\end{lemma}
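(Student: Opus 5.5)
The proof splits according to whether $t$ divides $n$, with the non-divisible case handled entirely by earlier results. Throughout, I will use the hypergraph description: $J_t(C_n)=\bigcap_{j=1}^{n}\l x_j,x_{j+1},\dots ,x_{j+t-1}\r$ (indices mod $n$). The minors of $J_t(C_n)$ have a simple form.
\begin{itemize}
\item Setting $x_i=1$ deletes every prime (window) containing $x_i$.
\item Setting $x_i=0$ replaces each window $W\ni x_i$ by $W\setminus\{x_i\}$, after which non-minimal windows are discarded.
\end{itemize}
A minor is König exactly when the minimum transversal number $\tau$ (the height) equals the maximum number $\nu$ of pairwise disjoint surviving windows. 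So to show non-packedness it suffices to exhibit one choice of $0$/$1$-substitutions for which $\nu<\tau$.

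\emph{Case 1: $t\nmid n$.} This needs no minor at all. By Proposition~\ref{prop:Konig}, $J_t(C_n)$ itself is not König, so $J_t(C_n)$ is not packed.

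\emph{Case 2: $n=t\ell$ with $\ell\geq 2$.} The remaining cases are $t=3$ with $\ell\geq 4$, $t=4$ with $\ell\geq 3$, and $t\geq 5$ with $\ell\geq 2$. Here I would look for a set $Z$ of variables to set to $0$ such that the surviving hypergraph is again a circular family of consecutive intervals whose size-structure fails the divisibility condition.

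\begin{itemize}
\item Setting a single variable $x_n=0$ contracts the cycle to $n-1$ vertices. The seam windows become $(t-1)$-intervals, and one checks $\tau=\nu=\ell$ (the seam allows one shorter interval), so one zero is not enough.
\item I would instead set two or more variables to zero, spaced so that several seams appear, possibly combined with setting a few variables to $1$ to kill windows and open the cycle into a path with chords. A trial choice is $x_n=0$ and $x_{n-\lfloor t/2\rfloor}=0$.
\item For such a choice I will compute $\tau$ as follows. A transversal must meet every surviving window, and a pigeonhole/gap argument as in Proposition~\ref{prop: mingens of J_t(C_n)} shows that consecutive chosen vertices can be at most (current window length) apart. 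This gives a lower bound on $\tau$.
\item I will bound $\nu$ by a length count, as in Proposition~\ref{prop:Konig}. Disjoint windows have total size at most the number of remaining vertices, and all but at most a couple of them have size $t$. This should give $\nu\le \tau-1$.
\end{itemize}

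The main obstacle is finding the right substitution pattern uniformly in $t$ and $\ell$, and verifying that it fails precisely outside $\{(6,3),(9,3),(8,4)\}$. For large $t$ there is room to create windows of mixed sizes that cannot tile, which breaks the counting equality. The problem arises for small $t$ combined with small $\ell$, where the shortened windows can still pack perfectly. I expect this to be why the three exceptional pairs survive. For the remaining small cases, namely $t=3$ with $\ell\ge 4$ and $t=4$ with $\ell\ge 3$, I would give explicit substitutions (for instance, zeros at two vertices at distance $t+1$) and verify $\nu<\tau$ directly. If needed, I would also use a reduction: a suitable zero-substitution on a long cycle $C_{t\ell}$ yields a minor equal to a $J_t(C_m)$-type ideal with $t\nmid m$, or a relabelled $J_{t'}$-type ideal. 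Proposition~\ref{prop:Konig} then applies to that minor.
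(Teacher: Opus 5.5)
Your Case 1 is the paper's argument verbatim. Case 2 ($n=t\ell$) is the entire content of the lemma, and there you give a search plan rather than a proof. The plan is also built on the wrong K\"onig criterion.

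\textbf{The criterion.} For a minor $J'=\bigcap_{W\in\mathcal W'}\langle W\rangle$, the height is $\min_{W\in\mathcal W'}|W|$. $J'$ is K\"onig iff $\mathcal W'$ has that many pairwise disjoint transversals, since these contain the minimal generators of $J'$; this is what Proposition~\ref{prop:Konig} counts. Your condition ``minimum transversal number $=$ maximum number of disjoint windows'' is instead the K\"onig property of the window clutter itself, i.e.\ of the corresponding minor of $I_t(C_n)$. Your gap and length counts estimate exactly those two numbers.

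Your single-zero computation exposes this. After $x_n=0$ the windows through $x_n$ have size $t-1$, so the height of that minor of $J_t(C_n)$ is $t-1$, not $\ell$. Proving $\nu<\tau$ in your sense would only show that $I_t(C_n)$ is not packed. That does not imply its Alexander dual is not packed, because the converse of the Bodas et al.\ result quoted in the introduction fails.

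\textbf{The substitutions you float also fail.}
\begin{itemize}
\item For $t=3$, the trial $x_n=x_{n-1}=0$ turns $\{x_{n-2},x_{n-1},x_n\}$ into the singleton $\{x_{n-2}\}$. The minor then has height $1$ and is trivially K\"onig.
\item Zeros at distance $t+1$ fail already for $(n,t)=(12,3)$. With $x_1=x_5=0$ the minor has height $2$, and $\{x_2,x_4,x_7,x_9,x_{11}\}$, $\{x_3,x_6,x_8,x_{10},x_{12}\}$ are disjoint covers of it, so it is K\"onig.
\end{itemize}

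\textbf{The missing idea.} It is the one you mention only in passing: choose zeros so that the minor is literally a cover ideal $J_{t'}(C_m)$ with $t'\nmid m$, and then apply the counting of Proposition~\ref{prop:Konig}. Two constraints apply.
\begin{itemize}
\item It is impossible with $t'=t$. A zeroed vertex shrinks the windows through it below size $t$ unless they are killed by a $1$, and $1$'s cut the cycle into paths.
\item It is impossible if every window loses the same number of vertices. Equal loss for $[i,i+t-1]$ and $[i+1,i+t]$ forces $x_i$ and $x_{i+t}$ to be zeroed together, so the zero set is $t$-periodic and $t'\mid m$.
\end{itemize}
So you need irregular spacing, in which windows that lose nothing are absorbed by shrunken neighbours.

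\textbf{What the paper does.}
\begin{itemize}
\item For $t=3$ it places zeros at gaps $3$ and $4$, producing $J_2(C_m)$ with $m$ odd ($m=9k-1,\,9k,\,8k+3,\,8k+5,\,8k+7$ according to $n \bmod 12$). This minor is not K\"onig, since two disjoint vertex covers of an odd cycle would need more than $m$ vertices.
\item For $t\ge4$ it sets $x_{tj}=0$ for $1\le j\le \ell$. Every $t$-window contains exactly one multiple of $t$, so the minor is $J_{t-1}(C_{(t-1)\ell})$, and one inducts on $t$.
\item The cases $(12,4)$ and $(10,5)$ would reduce to the packed $J_3(C_9)$ and $J_4(C_8)$. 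They are instead handled by explicit zero patterns giving $J_2(C_7)$ and $J_2(C_5)$.
\end{itemize}
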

\begin{proof}
     Note that if $n \neq \ell t$, then by Proposition~\ref{prop:Konig} it follows that $J_t(C_n)$ is not K\"onig, and therefore $J_t(C_n)$ is not packed. Hence, for the remainder of the proof we may assume that $n=\ell t$ for some integer $\ell \geq 2$. We first consider the case $t=3$. Suppose that $n=3\ell$ with $\ell \geq 4$. In this case the ideal $J_3(C_n)$ has the form
\begin{equation}\label{eq:coverideal3path}
J_3(C_n)
= \langle x_{n-1},x_n,x_1 \rangle \cap \langle x_n,x_1,x_2 \rangle
\cap \left( \bigcap_{i=1}^{n-2} \langle x_i,x_{i+1},x_{i+2} \rangle \right).
\end{equation}

Our strategy is to set suitable variables equal to zero so that the resulting ideal becomes the cover ideal of an odd cycle. Theorem~\ref{thm:simis-coverideals-of-edgeideals} then implies that $J_3(C_n)$ is not packed. We proceed by considering the following cases.

\noindent
\textsc{Case 1}: $n=12k$ with $k\ge1$.

\noindent
\textsc{Subcase 1(A):} Suppose $k$ is even.
Set $x_1=x_4=x_7=x_{10}=0$ and set $x_{4i+13}=0$ for $0\le i\le 3k-4$.
Substituting these values into Equation~(\ref{eq:coverideal3path}) gives
\[
\begin{aligned}
I
&= \langle x_n,x_2 \rangle \cap \langle x_2,x_3 \rangle \cap \langle x_3,x_5 \rangle \cap \langle x_5,x_6 \rangle
   \cap \langle x_6,x_8 \rangle \cap \langle x_8,x_9 \rangle \\
&\qquad \cap \langle x_9,x_{11} \rangle \cap \langle x_{11},x_{12} \rangle \cap  \langle x_{12},x_{14} \rangle \\
&\qquad \cap
\left( \bigcap_{i=0}^{3k-5}
\left(\langle x_{4i+14},x_{4i+15} \rangle \cap \langle x_{4i+15},x_{4i+16} \rangle \cap \langle x_{4i+16},x_{4i+18} \rangle \right)\right) \\
&\qquad \cap \langle x_{12k-2},x_{12k-1} \rangle \cap \langle x_{12k-1},x_{12k}\rangle,
\end{aligned}
\]
which is the cover ideal of $C_{9k-1}$. Since $k$ is even, $9k-1$ is odd.

\noindent
\medskip
\textsc{Subcase 1(B):} Suppose $k$ is odd.
Set $x_{4i+1}=0$ for $0\le i\le 3k-1$. Then
\[
I
= \langle x_n,x_2 \rangle
\cap \left(\bigcap_{i=0}^{3k-2}
\left(\langle x_{4i+2},x_{4i+3} \rangle \cap \langle x_{4i+3},x_{4i+4} \rangle \cap \langle x_{4i+4},x_{4i+6} \rangle \right)\right)
\cap \langle x_{12k-1},x_{12k} \rangle,
\]
which is the cover ideal of $C_{9k}$. Since $k$ is odd, $9k$ is odd.

\medskip
\noindent
\textsc{Case 2}: Suppose $n=12k+3$ with $k\ge1$. Set $x_1=x_5=x_9=0$ and set $x_{3i+13}=0$ for $0\le i\le 4k-4$.
Substituting into Equation~(\ref{eq:coverideal3path}) yields
\[
\begin{aligned}
I
&= \langle x_n,x_2 \rangle \cap \langle x_2,x_3 \rangle \cap \langle x_3,x_4 \rangle
   \cap \langle x_4,x_6 \rangle \cap \langle x_6,x_7 \rangle \\
&\qquad \cap \langle x_7,x_8 \rangle \cap \langle x_8,x_{10} \rangle \cap \langle x_{10},x_{11} \rangle
   \cap \langle x_{11},x_{12} \rangle \cap \langle x_{12},x_{14} \rangle \\
&\qquad \cap
\left(\bigcap_{i=0}^{4k-5}
\left( \langle x_{3i+14},x_{3i+15} \rangle \cap \langle x_{3i+15},x_{3i+17} \rangle \right)\right)
\cap \langle x_{12k+2},x_{12k+3} \rangle,
\end{aligned}
\]
which is the cover ideal of the odd cycle $C_{8k+3}$.

\medskip
\noindent
\textsc{Case 3:} Suppose $n=12k+6$ with $k\ge1$. Set $x_{3i+1}=0$ for $0\le i\le 4k-2$ and additionally set $x_{12k-1}=x_{12k+3}=0$. Then Equation~(\ref{eq:coverideal3path}) yields
\[
\begin{aligned}
I
&= \left(\bigcap_{i=0}^{4k-3}
\left(\langle x_{3i+2},x_{3i+3} \rangle \cap \langle x_{3i+3},x_{3i+5} \rangle \right)\right)
\cap \langle x_{12k-4},x_{12k-3} \rangle \cap \langle x_{12k-3},x_{12k-2}\rangle \\
&\qquad \cap \langle x_{12k-2},x_{12k} \rangle \cap \langle x_{12k},x_{12k+1}\rangle
\cap \langle x_{12k+1},x_{12k+2} \rangle \cap \langle x_{12k+2},x_{12k+4} \rangle \\
&\qquad \cap \langle x_{12k+4},x_{12k+5} \rangle \cap \langle x_{12k+5},x_{12k+6} \rangle
\cap \langle x_{12k+6},x_2 \rangle,
\end{aligned}
\]
which is the cover ideal of the odd cycle $C_{8k+5}$.

\medskip
\noindent
\textsc{Case 4:} Suppose $n=12k+9$ with $k\ge1$. Set $x_{3i+1}=0$ for $0\le i\le 4k-1$ and additionally set $x_{12k+2}=x_{12k+6}=0$. Then Equation~(\ref{eq:coverideal3path}) yields
\[
\begin{aligned}
I
&= \left(\bigcap_{i=0}^{4k-2}
\left(\langle x_{3i+2},x_{3i+3}) \cap \langle x_{3i+3},x_{3i+5}\rangle\right)\right)
\cap \langle x_{12k-1},x_{12k} \rangle \cap \langle x_{12k},x_{12k+1} \rangle \\
&\qquad \cap \langle x_{12k+1},x_{12k+3} \rangle \cap \langle x_{12k+3},x_{12k+4}\rangle
\cap \langle x_{12k+4},x_{12k+5} \rangle \cap \langle x_{12k+5},x_{12k+7}\rangle \\
&\qquad \cap \langle x_{12k+7},x_{12k+8}\rangle \cap (x_{12k+8},x_{12k+9})
\cap \langle x_{12k+9},x_2 \rangle,
\end{aligned}
\]
which is the cover ideal of a cycle of length ${8k+7}$, and this completes the proof for $t=3$.

Next, consider the case $t=4$. Let $\ell \ge 4$ and write $n=4\ell$. We have 
\[
J_4(C_{4\ell})= \langle x_{n-2},x_{n-1},x_n,x_1 \rangle \cap \langle x_{n-1}x_{n},x_1,x_2 \rangle
\cap \langle x_{n}x_{1},x_2,x_3 \rangle
\cap \left( \bigcap_{i=1}^{n-3} \langle x_i,x_{i+1},x_{i+2}, x_{i+2} \rangle \right).
\]
Let $I$ be the ideal obtained from $J_4(C_{4\ell})$ by setting
$x_{4m}=0$ for all $1\le m\le \ell$. Then
\[
\begin{aligned}
I = \langle x_1,x_2,x_3\rangle
\cap \langle x_2,x_3,x_5\rangle
& \cap \langle x_3,x_5,x_6\rangle
\cap \cdots
\cap \langle x_{4\ell-3},x_{4\ell-2},x_{4\ell-1}\rangle \\
&\qquad \cap \langle x_{4\ell-2},x_{4\ell-1},x_1\rangle
\cap \langle x_{4\ell-1},x_1,x_2\rangle .
\end{aligned}
\]
Observe that $I$ can be identified with the cover ideal of the
$3$-connected ideal of a cycle of length $3\ell$. Indeed, this cycle is obtained from $C_{4\ell}$ by removing the vertices $x_{4m}$,
$1\le m\le \ell$, and joining the two neighbors of $x_{4m}$ by an edge. Since $\ell\ge4$, it follows from the previous discussion that $I$ is not packed. Consequently, $J_4(C_{4\ell})$ is not packed for $\ell\ge4$. Now, consider $J_4(C_{12})$. Setting $x_1=x_4=x_6=x_8=x_{11}=0$ yields an ideal $I$ that can be identified with the cover ideal of the edge ideal of the cycle $C_7$. Since $J_2(C_7)$ is not packed, it follows that $J_4(C_{12})$ is also not packed.

Finally, we show that $J_t(C_{t \ell })$ is not packed for all $t\ge 5$ and $\ell\ge 2$ by induction on $t$. First consider the case $t=5$. Let $\ell\ge 3$ and obtain an ideal $I$ from $J_5(C_{5\ell})$ by setting $x_{5m}=0$ for all $1\le m\le \ell$. As in the previous arguments, the ideal $I$ can be identified with the cover ideal of the $4$-connected ideal of a cycle of length $4\ell$. Since $\ell\ge 3$, it follows from the above discussion that $I$ is not packed. Consequently, $J_5(C_{5\ell})$ is not packed for $\ell\ge 3$. For $J_5(C_{10})$, set $x_{2r}=0$ for $1\le r\le 5$. The resulting ideal $I$ can be identified with the cover ideal of the edge ideal of a $5$-cycle, which is not packed. Hence, $J_5(C_{10})$ is not packed.

Now assume $t\ge 6$ and that the assertion holds for all integers $t'<t$. Let $n=\ell t$ with $\ell\ge 2$, and obtain an ideal $I$ from $J_t(C_{t \ell})$ by setting $x_{tm}=0$ for all $1\le m\le \ell$. As before, the resulting ideal $I$ can be identified with the cover ideal of the $(t-1)$-connected ideal of a cycle of length ${(t-1)\ell}$. By the induction hypothesis, $I$ is not packed. Therefore, $J_t(C_{t\ell})$ is not packed.
\end{proof}

\begin{proposition}\label{prop:symbolicpowers-cycles}
    The following statements are true:
    \begin{enumerate}
        \item $J_t(C_n)^{(s)}= J_t(C_n)^{s}$ for all $s\geq 1$ if and only if 
\begin{enumerate}
            \item $t=3, n=3,6,9$;
            
            \item $t=4, n=4,8$;
            
            \item for $t\geq 5$, $n=t$.
        \end{enumerate}
        
        \item $J_t(C_n)^{(s)}= J_t(C_n)^{s}$ for all $s\geq 1$ if and only if $J_t(C_n)$ is packed.
    \end{enumerate}    
\end{proposition}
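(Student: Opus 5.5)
Both parts reduce to the same case analysis, so I will prove them together. The negative direction is cheap from what is already established. By Lemma~\ref{lem: Simis implies packed}, Simis implies packed, so non-packed implies non-Simis. Lemma~\ref{lem:cycle-not-packed} says $J_t(C_n)$ is not packed whenever $n>t$ and $(n,t)\notin\{(6,3),(9,3),(8,4)\}$. The case $n<t$ is degenerate: $I_t(C_n)=0$ and we exclude it, or treat it as trivial by convention. Hence it suffices to prove that $J_t(C_n)$ is Simis in the remaining cases: $n=t$ (all $t\ge 3$), and $(n,t)\in\{(6,3),(9,3),(8,4)\}$. Part (2) then follows at once: Simis gives packed by Lemma~\ref{lem: Simis implies packed}, and in every other case both properties fail by Lemma~\ref{lem:cycle-not-packed}.

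\emph{The case $n=t$.} Here $I_t(C_t)=\langle x_1\cdots x_t\rangle$, so $J_t(C_t)=\langle x_1,\dots,x_t\rangle$ is a prime ideal generated by variables. Ordinary and symbolic powers of such an ideal coincide, so $J_t(C_t)$ is Simis.

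\emph{The three sporadic cases.} I would use Proposition~\ref{prop: mingens of J_t(C_n)} to write down $\mathcal{G}(J_t(C_n))$ explicitly. For $(6,3)$ the generators are $x_ix_{i+3}$ together with the covers of type $x_ix_{i+2}x_{i+4}$. For $(9,3)$ and $(8,4)$ the analogous lists are finite. The cleanest route to Simis is through the known equivalence between the Simis property of a square-free monomial ideal and the Max-Flow-Min-Cut property of its clutter, equivalently total dual integrality of the associated covering system (Gitler--Reyes--Villarreal). Since each case is a single explicit finite clutter, I would verify one of the following:
\begin{enumerate}
\item the clutter of $J_t(C_n)$ is ideal and its Rees algebra is normal; or
\item directly, that $J^{(s)}=J^s$ for $s$ up to a computable bound. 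This bound exists because the symbolic Rees algebra of a square-free monomial ideal is generated in degree at most a bound determined by the vertices of the associated polyhedron.
\end{enumerate}
Concretely, for $C_6$ with $t=3$, the facets $\langle x_i,x_{i+1},x_{i+2}\rangle$ define the polyhedron $\{a\ge0:\ a_i+a_{i+1}+a_{i+2}\ge 1 \text{ for all } i\}$. I would check that its vertices are integral, which gives packing/idealness. I would then check that the integer decomposition property holds. Circularity of the constraint matrix (consecutive-ones in cyclic order) helps here, since circular-ones matrices with $n=\ell t$ behave like unions of interval matrices. Alternatively, one can argue by induction on $s$ in the style of Proposition~\ref{prop: Symbolic Prop for P_n}: pick $f\in J^{(s)}$ and find a generator $g\mid f$ with $f/g\in J^{(s-1)}$. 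Since $n=\ell t$, the candidates $g$ are the $t$ disjoint \lq\lq arithmetic progression\rq\rq{} covers $f_i$ of Proposition~\ref{prop:Konig}, together with their shifts.

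\emph{Main obstacle.} The hard step is the sporadic cases, especially $(9,3)$ and $(8,4)$: showing that some generator $g$ can always be split off so that $f/g$ still lies in every $P^{s-1}$. This requires $g$ to hit each facet prime exactly once at the primes where $f$ is tight. My first attempt would be to choose, among the minimal covers dividing $f$, one that meets every tight window $\{x_i,\dots,x_{i+t-1}\}$ in exactly one vertex. The combinatorial claim to verify is that the windows tight for $f$ always admit such a transversal lying in $\mathrm{Supp}(f)$. If that argument becomes unwieldy, I would fall back on a finite computer verification of normality of the Rees algebra and integrality of the covering polyhedron for these three specific clutters. That verification is a legitimate, finite check.
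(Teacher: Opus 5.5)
Your reduction of the `only if' direction and of part (2) to Lemma~\ref{lem:cycle-not-packed} and Lemma~\ref{lem: Simis implies packed} matches the paper, and so does your treatment of $n=t$. The gap is the `if' direction for $(n,t)\in\{(6,3),(9,3),(8,4)\}$, which is the only non-trivial content of part (1). You list strategies but carry none of them out, and the concrete steps you commit to are either wrong or circular.

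\emph{The candidate generators are wrong.} You claim the candidates for $g$ are the progression covers $f_i$ (whose ``shifts'' are again the $f_i$). This is false: $f=(x_2x_4x_6)^s$ lies in $J_3(C_6)^{(s)}$, yet no $x_ix_{i+3}$ divides it. The argument genuinely needs unevenly spaced generators, such as $x_2x_4x_6$ for $C_6$, $x_1x_4x_6x_9$ and $x_1x_4x_5x_8$ for $C_9$, and $x_2x_5x_8$ for $C_8$.

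\emph{The transversal claim is circular.} Your tight-window criterion is correct. A minimal $t$-cover meets each window in at most two vertices, so for a minimal generator $g\mid f$ one has $f/g\in J^{(s-1)}$ iff $g$ meets every window tight for $f$ exactly once. But asserting that such a $g$ always exists just restates $J^{(s)}\subseteq J\cdot J^{(s-1)}$, i.e.\ what is to be proved. Any proof of it must use the specific values of $(n,t)$. An argument based only on $n=\ell t$ and the circular-ones structure would equally ``prove'' that $J_3(C_{12})$ or $J_5(C_{10})$ is Simis, contradicting Lemma~\ref{lem:cycle-not-packed}.

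\emph{The fallbacks are not executed.} The degree bound in (b) is never specified. Integrality of the window polyhedron only says that the window clutter is ideal, hence by Lehman's theorem so is its blocker, the clutter of $J_t(C_n)$. It gives neither packing nor normality.

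The missing idea, which is how the paper proceeds, is to reduce to the path case you already have. Induct on $s$, and by cyclic symmetry assume $x_n\mid f$. Write $J_t(C_n)^{(s)}$ as the intersection of two pieces: the $s$-th powers of the $t$ window primes containing $x_n$, and $J_t(P_{n-1})^{(s)}$. Proposition~\ref{prop: Symbolic Prop for P_n} then yields $g\in\mathcal{G}(J_t(P_{n-1}))$ with $g\mid f$ and $f/g\in J_t(P_{n-1})^{(s-1)}$.

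The set $\mathcal{G}(J_t(P_{n-1}))$ is an explicit short list: four monomials for $P_5$, twelve for $P_8$, seven for $P_7$. One checks case by case that $g$ itself, or $g'=x_ng$, is a minimal generator of $J_t(C_n)$ dividing $f$ whose quotient stays in the $(s-1)$-st powers of the windows through $x_n$. Sometimes this first requires reducing, via the earlier cases, to a situation such as $x_6^s\mid f$; this happens for $g=x_2x_4$ in $C_6$. Multiplying by $x_n$ does not affect the windows avoiding $x_n$. So the quotient lies in $J_t(C_n)^{(s-1)}=J_t(C_n)^{s-1}$ by induction, and $f\in J_t(C_n)^s$.

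Without this reduction, or an actual verification of idealness and normality for the three specific clutters, the sporadic cases remain unproved.
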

\begin{proof}
    \medskip \noindent
    (1) For the `if' part, observe that when $n=t$, the ideal $J_t(C_t)$ is generated by variables. Consequently, $J_t(C_t)^{(s)} = J_t(C_t)^s$ for all $s \ge 1$. Now assume that $n \ne t$. We treat the remaining cases for $n$ and $t$ separately.

    We first consider the case $n=6$ and $t=3$. We show that $J_3(C_6)^{(s)} = J_3(C_6)^s$ for all $s \ge 1$. The proof proceeds by induction on $s$. The assertion is clear for $s=1$. Assume that $J_3(C_6)^{(\ell)} = J_3(C_6)^{\ell}$ for every $\ell < s$. It therefore suffices to prove that $J_3(C_6)^{(s)} \subseteq J_3(C_6)^s.$ Let $f \in J_3(C_6)^{(s)}$. Without loss of generality, assume that $x_6 \mid f$. From the description of symbolic powers, it follows that
    \[
    J_3(C_6)^{(s)} =\l x_4,x_5,x_6 \r^s
    \cap \l x_5,x_6,x_1\r^s
    \cap \l x_6,x_1,x_2 \r^s
    \cap J_3(P_5)^{(s)}.
    \]
    In particular, $f \in J_3(P_5)^{(s)}$. Hence, by Proposition~\ref{prop: Symbolic Prop for P_n}, there exists $g \in \mathcal{G}(J_3(P_5))$ such that $g \mid f$ and $\frac{f}{g} \in J_3(P_5)^{s-1}= J_3(P_5)^{(s-1)}.$ We consider all possible choices of the generator $g$.
    \begin{enumerate}
        \item If $g = x_1x_4$ or $g = x_2x_5$, then $g \in \mathcal{G}(J_3(C_6))$ and $g \mid f$. Note that $\frac{f}{g} \in J_3(C_6)^{(s-1)} = J_3(C_6)^{s-1}$, where the last equality follows from the induction hypothesis. Hence $f \in J_3(C_6)^s$.

        \item If $g = x_3$, set $g' = x_3x_6$. Then $g' \in \mathcal{G}(J_3(C_6))$ and $g' \mid f$. Again, observe that $\frac{f}{g'} \in J_3(C_6)^{(s-1)} = J_3(C_6)^{s-1}$, where the last equality comes from the induction hypothesis. Thus $f \in J_3(C_6)^s$.

        \item If $g = x_2x_4$, we may assume that $x_1 \nmid f$, $x_3 \nmid f$, and $x_5 \nmid f$; otherwise one of the previous cases would apply. Hence $x_6^s \mid f$. If we set $g' = x_2x_4x_6$, then observe that $\frac{f}{g'} \in J_3(C_6)^{(s-1)} = J_3(C_6)^{s-1}$, and therefore $f \in J_3(C_6)^s$.
    \end{enumerate}
    Thus $J_3(C_6)^{(s)} \subseteq J_3(C_6)^s$, and hence, $J_3(C_6)^{(s)} = J_3(C_6)^s$ for all $s \ge 1$.

Next, we consider the case $n=9$ and $t=3$. To show $J_3(C_9)^{(s)} = J_3(C_9)^s$ for all $s \ge 1$, we again proceed by induction on $s$. The statement is true for $s=1$. Assume that $J_3(C_9)^{(\ell)} = J_3(C_9)^{\ell}$ for every $\ell<s$. It now suffices to prove that $J_3(C_9)^{(s)} \subseteq J_3(C_9)^s.$ Let $f\in J_3(C_9)^{(s)}$ and assume $x_9\mid f$. From the expression of symbolic powers, we get
\[
J_3(C_9)^{(s)} = \l x_7,x_8,x_9 \r ^s
\cap \l x_8,x_9,x_1 \r ^s
\cap \l x_9,x_1,x_2\r ^s
\cap J_3(P_8)^{(s)}.
\]
Hence, by Proposition~\ref{prop: Symbolic Prop for P_n}, there exists
$g\in \mathcal{G}(J_3(P_8))$ such that
$g\mid f$ and $\frac{f}{g}\in J_3(P_8)^{s-1}
=J_3(P_8)^{(s-1)}.$ By Proposition~\ref{prop: mingens of J_t(P_n)}, we have
\begin{align*}   
\mathcal{G}(J_3(P_8)) = \left\lbrace
\begin{array}{c}
    x_1x_4x_5x_8, x_1x_4x_6, x_1x_4x_7, x_2x_4x_6,x_2x_4x_7,x_2x_5x_6, \\ [1mm]
    x_2x_5x_7,x_2x_5x_8,x_3x_4x_7,x_3x_5x_7,x_3x_5x_8,x_3x_6
\end{array}
\right\rbrace .
\end{align*}
We now consider the possible choices for the generator $g$ case by case.

\begin{enumerate}

\item If $g=x_1x_4x_7$ or $g=x_2x_5x_8$, then
$g\in \mathcal{G}(J_3(C_9))$.
Thus $g\mid f$ and moreover,
$\frac{f}{g}\in J_3(C_9)^{(s-1)}=J_3(C_9)^{s-1}$, where the last equality is by induction. Therefore, $f\in J_3(C_9)^s$.

\item If $g=x_3x_6$, set $g'=x_3x_6x_9$.
Then $g'\in \mathcal{G}(J_3(C_9))$ and $g'\mid f$.
Note that $\frac{f}{g'}\in J_3(C_9)^{(s-1)}$, and hence by the induction hypothesis, $\frac{f}{g'}\in J_3(C_9)^{s-1}$. Therefore, $f\in J_3(C_9)^s$.

\item If $g=x_1x_4x_6$, we may assume
$x_3\nmid f$ and $x_7\nmid f$; otherwise one of the previous cases applies. It then follows that
$x_1^{\alpha_1}x_2^{\alpha_2}\mid f$ with $\alpha_1+\alpha_2\ge s$ and $x_8^{\beta_1}x_2^{\beta_2}\mid f$ with $\beta_1+\beta_2\ge s$. Let $g'=x_1x_4x_6x_9\in J_3(C_9)$. Then observe that $\frac{f}{g'}\in J_3(C_9)^{(s-1)}=\frac{f}{g'}\in J_3(C_9)^{s-1}$, where the equality follows by induction. Hence, $f\in J_3(C_9)^s$. A similar argument applies to the cases when  $g\in \{ x_3x_5x_8, x_2x_4x_6, x_3x_5x_7, x_2x_5x_6, x_3x_4x_7, x_2x_4x_7, x_2x_5x_7\}$.

\item If $g=x_1x_4x_5x_8$, we may assume that $x_2,x_3, x_6,x_7\nmid f$; otherwise, one of the previous cases applies. It then follows that $x_1^s\mid f$ and $x_8^s\mid f$. Hence, $\frac{f}{g}\in J_3(C_9)^{(s-1)}=J_3(C_9)^{s-1}$, where the equality follows from the induction hypothesis.
Since $g\in \mathcal{G}(J_3(C_9))$, we conclude that
$f\in J_3(C_9)^s$.

\end{enumerate}
Thus, in any case $f\in J_3(C_9)^s$, and therefore $J_3(C_9)^{(s)} = J_3(C_9)^s$ for all $s\ge1$.

Finally, consider the case $n=8$ and $t=4$. We prove that
$J_4(C_8)^{(s)} = J_4(C_8)^s$ for all $s \ge 1$ by induction on $s$. Assume that $J_4(C_8)^{(\ell)} = J_4(C_8)^{\ell}$ for every $1\le \ell < s$. Without loss of generality, assume that $f \in J_4(C_8)^{(s)}$ with $x_8 \mid f$. From the expressions of symbolic powers, we have
\[
J_4(C_8)^{(s)}= \l x_5,x_6,x_7,x_8\r ^s
\cap \l x_6,x_7,x_8,x_1\r ^s
\cap \l x_7,x_8,x_1,x_2\r ^s
\cap \l x_8,x_1,x_2,x_3\r ^s
\cap J_4(P_7)^{(s)}.
\]
Since $f \in J_4(P_7)^{(s)}$, by Proposition~\ref{prop: Symbolic Prop for P_n}, $f \in J_4(P_7)^{s}$, and hence, there exists $g \in \mathcal{G}(J_4(P_7))$ such that $g \mid f$ and $\frac{f}{g} \in J_4(P_7)^{s-1}$. By Proposition~\ref{prop: mingens of J_t(P_n)}, we have 
\[
\mathcal{G}(J_4(P_7))=\{x_1x_5,\; x_2x_5,\; x_2x_6,\; x_3x_5,\; x_3x_6,\; x_3x_7,\; x_4\}.
\]
We now consider the possible choices for the generator $g$ case by case.
\begin{enumerate}

\item If $g \in \{x_1x_5, x_2x_6, x_3x_7\}$, then
$g \in \mathcal{G}(J_4(C_8))$. Observe that $\frac{f}{g} \in J_4(C_8)^{(s-1)} = J_4(C_8)^{s-1}$, where the equality follows from the induction hypothesis. Hence $f \in J_4(C_8)^s$.

\item If $g = x_4$, take $g' = x_4x_8$.
Then $g' \in \mathcal{G}(J_4(C_8))$ and $g' \mid f$.
Moreover, $\frac{f}{g'} \in J_4(C_8)^{(s-1)} = J_4(C_8)^{s-1}$,
so $f \in J_4(C_8)^s$.

\item If $g = x_2x_5$, we may assume that
$x_1 \nmid f$, $x_4 \nmid f$, and $x_6 \nmid f$;
otherwise one of the previous cases applies.
Then $x_5^{\alpha_1}x_7^{\alpha_2} \mid f$ with $\alpha_1+\alpha_2 \ge s$,
$x_7^{\beta_1}x_8^{\beta_2} \mid f$ with $\beta_1+\beta_2 \ge s$,
and $x_2^{\gamma_1}x_3^{\gamma_2} \mid f$ with $\gamma_1+\gamma_2 \ge s$.
Let $g' = x_2x_5x_8$. Then
$\frac{f}{g'} \in J_4(C_8)^{(s-1)} = J_4(C_8)^{s-1}$,
and hence $f \in J_4(C_8)^s$.

\item If $g = x_3x_5$, we may assume that
$x_1,x_2,x_4,x_7 \nmid f$; otherwise a previous case applies.
Then $x_8^s \mid f$.
Let $g' = x_3x_5x_8$. Hence
$\frac{f}{g'} \in J_4(C_8)^{(s-1)} = J_4(C_8)^{s-1}$,
and therefore $f \in J_4(C_8)^s$.

\item If $g = x_3x_6$, we may assume that
$x_2,x_4,x_5,x_7 \nmid f$; otherwise one of the earlier cases applies.
Thus $x_3^s \mid f$ and $x_6^s \mid f$.
Let $g' = x_3x_5x_8$. Then
$\frac{f}{g'} \in J_4(C_8)^{(s-1)} = J_4(C_8)^{s-1}$,
so $f \in J_4(C_8)^s$.

\end{enumerate}
Therefore, in any case $f\in J_4(C_8)^s$. Hence, we can conclude that $J_4(C_8)^{(s)} = J_4(C_8)^s$ for all $s \ge 1$. This completes the proof for the `if' part. The `only if' part follows from Lemma~\ref{lem:cycle-not-packed}
and Lemma~\ref{lem: Simis implies packed}.

\medskip 
\noindent
(2) Follows from (1) and Lemma~\ref{lem:cycle-not-packed}.
\end{proof}

Let $G$ be a connected graph on $n$ vertices, and assume that $t = n$. Then $I_t(G) \subseteq R$ is a principal ideal generated by the product of all the variables. Consequently, $J_t(G)$ coincides with the maximal ideal $\l x_1,\dots , x_n\r$. In particular, $J_t(G)$ is Simis and satisfies the packing property.  The main result now follows by combining Lemma~\ref{lem: main}, Proposition~\ref{prop: Symbolic Prop for P_n}, and Proposition~\ref{prop:symbolicpowers-cycles}.

\begin{theorem}\label{thm: main}
    Let $G$ be a connected graph on $n$ vertices and let $t\geq 3$ be an integer. Let $J_t(G)$ denote the Alexander dual of the $t$-connected ideal of $G$. Then the following statements are equivalent:
    \begin{enumerate}
        \item $J_t(G)$ satisfies the packing property;
        \item $J_t(G)^{(s)}=J_t(G)^s$ for all $s\geq 1$;
        \item one of the following holds:
        \begin{enumerate}
            \item[(a)] $n=t$;
            \item[(b)] $G=P_n$ for some $n\geq t$;
            \item[(c)] $G=C_n$ and one of the following holds: \begin{itemize}
            \item[(i)] $t=3 \text{ and } n\in \{3,6,9\}$;
            \item[(ii)] $t=4 \text{ and } n\in \{4,8\}$;
            \item[(iii)] $t\geq 5 \text{ and } n=t$.
        \end{itemize}
        \end{enumerate}        
    \end{enumerate}
\end{theorem}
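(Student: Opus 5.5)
The proof is a cycle of implications $(3)\Rightarrow(2)\Rightarrow(1)\Rightarrow(3)$, assembled from the results already established.

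\emph{$(3)\Rightarrow(2)$.} If $n=t$, then $I_t(G)$ is principal, generated by the product of all variables. So $J_t(G)=\l x_1,\dots,x_n\r$, whose symbolic and ordinary powers agree. If $G=P_n$ with $n\geq t$, Proposition~\ref{prop: Symbolic Prop for P_n} gives the equality. If $G=C_n$ with the listed pairs $(t,n)$, Proposition~\ref{prop:symbolicpowers-cycles}(1) gives it.

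\emph{$(2)\Rightarrow(1)$.} This is exactly Lemma~\ref{lem: Simis implies packed}.

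\emph{$(1)\Rightarrow(3)$.} This direction carries the content. Assume $J_t(G)$ is packed and $n\neq t$.

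First, $n<t$ is impossible for this to be interesting. In that case $I_t(G)=0$, which I treat as degenerate and exclude by the standing convention $n\geq t$. So assume $n\geq t+1$.

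By Lemma~\ref{lem: main}(2), every connected induced subgraph $H$ of $G$ on $t+1$ vertices has at most two non-cut vertices. It therefore has exactly two, so $H$ is an induced path $P_{t+1}$.

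Next I deduce that $G$ has maximum degree at most $2$. Suppose a vertex $v$ had three neighbours $a,b,c$. Grow a connected set from $\{v,a,b,c\}$ inside the connected graph $G$ until it has exactly $t+1$ vertices; this is possible since $t+1\geq 4$ and $n\geq t+1$. The induced subgraph on it is connected. I need it to have at least three non-cut vertices, and the argument is that any spanning tree of it has at least three leaves because $v$ has degree at least $3$ there. Strictly, non-cut vertices of a graph are not the same as leaves of a spanning tree, so I would argue directly: a connected graph with exactly two non-cut vertices is a path, and a path has no vertex of degree $3$. This contradicts the previous paragraph.

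Hence $G$ is connected with maximum degree at most $2$, so $G$ is either $P_n$ or $C_n$. Paths are already in (3)(b). If $G=C_n$ with $n>t$, then Lemma~\ref{lem:cycle-not-packed} shows $J_t(C_n)$ is not packed unless $(n,t)\in\{(6,3),(9,3),(8,4)\}$, which are exactly the cases in (3)(c). The case $G=C_t$ falls under (a).

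The main obstacle is this degree-$3$ step. One must check that growing the set preserves the degree-$\geq 3$ vertex inside an induced connected subgraph of size exactly $t+1$, which the connectivity of $G$ ensures. One must also invoke the fact that the only connected graphs with exactly two non-cut vertices are paths. The remaining steps are citations of earlier results.
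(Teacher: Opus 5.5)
Your proposal is correct and follows the paper's own route: $(3)\Rightarrow(2)$ via the $n=t$ observation together with Propositions~\ref{prop: Symbolic Prop for P_n} and~\ref{prop:symbolicpowers-cycles}, $(2)\Rightarrow(1)$ via Lemma~\ref{lem: Simis implies packed}, and $(1)\Rightarrow(3)$ via Lemma~\ref{lem: main} and Lemma~\ref{lem:cycle-not-packed}. Your explicit degree-$3$ argument makes rigorous the reduction to paths and cycles that the paper only states as ``this suggests'': you grow $\{v,a,b,c\}$ to a connected induced subgraph on $t+1$ vertices, which cannot be a path and so has at least three non-cut vertices.
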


\medskip

\subsection*{Applications to Linear Optimization}

Let $I=\l \mathbf{x}^{\mathbf{a}_1}, \dots , \mathbf{x}^{\mathbf{a}_m}\r\subseteq \K[x_1,\dots , x_n]$ be a square-free monomial ideal, where $\mathbf{a}_i\in \{0,1\}^n$. Let $A$ be the $n\times m$ matrix with entries equal to $0$ or $1$, such that its columns are the vectors $\mathbf{a}_1, \mathbf{a}_2, \dots ,\mathbf{a}_m$. Consider the integer linear program given by
\begin{align}\label{eqn: finding dual}
    A^T\mathbf{x}\geq 1, \mathbf{x}\in \{0,1\}^n.
\end{align}
Note that the above ILP always has a solution. We are interested in the minimal solutions of the above system. Note that a minimal feasible solution $\mathbf{x}$ (i.e., for any $\mathbf{w} \in \{0,1\}^n$, if $\mathbf{x} - \mathbf{w} \in \{0,1\}^n$ then $\mathbf{x} - \mathbf{w}$ is not a feasible solution) of the above system corresponds to a minimal monomial generator of the Alexander dual of the ideal $I$.  Let $\{\mathbf{b}_1, \mathbf{b}_2, \dots ,\mathbf{b}_r\}$ be the set of all minimal feasible solutions of the system given in Equation~(\ref{eqn: finding dual}). Consider the $n\times r$ matrix $B$ such that its column vectors are the vectors $\mathbf{b}_1,\mathbf{b}_2,\dots ,\mathbf{b}_r$. Consider the following linear programming problems:
\begin{equation}
    \tag{3}
    \begin{aligned}
        \text{minimize} \quad & \alpha \cdot \mathbf{y} \\
        \text{subject to} \quad & B^T\mathbf{y} \ge \mathbf{1}, \; \mathbf{y} \in \mathbb{N}^n .
    \end{aligned}
\end{equation}
    \text{and}
\begin{equation}
    \tag{4}
    \begin{aligned}
        \text{maximize} \quad & \mathbf{z} \cdot \mathbf{1} \\
        \text{subject to} \quad & B\mathbf{z} \le \alpha, \; \mathbf{z} \in \mathbb{N}^r.
    \end{aligned}
\end{equation}
where $\alpha\in \mathbb{N}^n$, and $\mathbf{1}$ is the vector in $\mathbb{N}^r$ with each entry equal to $1$. Let $\tau_{\alpha}(B)$ and $\nu_{\alpha}(B)$ be the optimal values for the above linear programming problems, respectively. With the notations and terminologies given above, and as a consequence of our main theorem, we have the following.

\begin{theorem}
    Let $A=M_t(G)$, the incidence matrix corresponding to a $t$-connected ideal of a connected graph $G$, and let $B$ be the matrix corresponding to the minimal feasible solution of the integer linear program (2). Then for each $\alpha\in \mathbb{N}^n$, $\tau_{\alpha}(B)=\nu_{\alpha}(B)$ if and only if the matrix $A$ has one of the following forms: 
\begin{enumerate}
    \item $A=M_t(P_n), n\geq t$. Note that $M_t(P_n)$ is the incidence matrix corresponding to the $t$-connected ideal of the path graph, given by 
    \[
        (M_t(P_n))_{ij}=
        \begin{cases}
        1 & \text{if } j \le i \le j+t-1,\\
        0 & \text{otherwise},
        \end{cases}
    \]
    where $1\le i\le n$ and $1\le j\le n-t+1$.

    \item $A=M_t(C_n)$ with $t=n$ or $(t,n)\in \{(3,6),(3,9), (4,8)\}$. Note that $M_t(C_n)$ is the incidence matrix corresponding to the $t$-connected ideal of the cycle $C_n$, given by
    \[
    (M_t(C_n))_{ij}=
    \begin{cases}
    1 & \text{if } 0 \le (i-j \pmod n) \le t-1, \\
    0 & \text{otherwise}.
    \end{cases}
    \]
    
\end{enumerate}

\end{theorem}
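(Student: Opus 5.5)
The statement to prove is the linear-optimization theorem. I would translate it into the Simis property of $J_t(G)$ and then quote Theorem~\ref{thm: main}.

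\textbf{Step 1: identify $B$ with $J_t(G)$.} The columns of $A=M_t(G)$ are the exponent vectors of the generators of $I_t(G)$. A $0/1$ vector $\mathbf{x}$ satisfies $A^T\mathbf{x}\geq \mathbf{1}$ exactly when its support meets the support of every generator, that is, when the support is a $t$-cover. So the minimal feasible solutions $\mathbf{b}_1,\dots,\mathbf{b}_r$ are the exponent vectors of $\mathcal{G}(J_t(G))$, and $B$ is the incidence matrix of the clutter of minimal $t$-covers.

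\textbf{Step 2: interpret $\tau_\alpha(B)$ and $\nu_\alpha(B)$ as powers.} Fix $\alpha\in\mathbb{N}^n$ and put $\mathbf{x}^\alpha$. The integer $\nu_\alpha(B)$ is the largest $s$ such that $\mathbf{x}^\alpha$ is divisible by a product of $s$ generators of $J_t(G)$. Hence $\nu_\alpha(B)=\max\{s : \mathbf{x}^\alpha\in J_t(G)^s\}$.

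For $\tau_\alpha(B)$, I would use the primary decomposition of $J_t(G)=I_t(G)^\vee$. Its minimal primes are the $\mathfrak{m}_{\mathbf{a}_j}$, one for each generator of $I_t(G)$. This gives
\[
\max\{s : \mathbf{x}^\alpha\in J_t(G)^{(s)}\}=\min_j \alpha\cdot \mathbf{a}_j .
\]
The program in the statement (minimize $\alpha\cdot\mathbf{y}$ subject to $B^T\mathbf{y}\geq \mathbf{1}$, $\mathbf{y}\in\mathbb{N}^n$) is the blocker covering problem over $B$. An optimal $\mathbf{y}$ can be taken to be a $0/1$ vector, since entries larger than $1$ can be lowered to $1$ without breaking feasibility or increasing the cost.

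A $0/1$ vector $\mathbf{y}$ is feasible exactly when its support meets every minimal $t$-cover. By the standard blocker duality $b(b(\mathcal{C}))=\mathcal{C}$ for clutters, the minimal such supports are the supports of the $\mathbf{a}_j$. Therefore $\tau_\alpha(B)=\min_j \alpha\cdot\mathbf{a}_j$.

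\textbf{Step 3: conclude.} The condition $\tau_\alpha(B)=\nu_\alpha(B)$ for all $\alpha$ says that every monomial of $J_t(G)^{(s)}$ lies in $J_t(G)^s$, for every $s$. Since these ideals are monomial, this is exactly $J_t(G)^{(s)}=J_t(G)^s$ for all $s\geq 1$, i.e.\ $J_t(G)$ is Simis.

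Theorem~\ref{thm: main} then classifies the connected graphs with this property. They are $G=P_n$ with $n\geq t$, $G=C_n$ with $n=t$ or $(t,n)\in\{(3,6),(3,9),(4,8)\}$, and any connected $G$ with $n=t$. The last case has $A$ equal to a single all-ones column, which is $M_t(P_t)$ (and $M_t(C_t)$), so it is already listed in the statement. Writing out the incidence matrices for paths and cycles gives the displayed formulas.

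\textbf{Main obstacle.} The delicate step is the identification $\tau_\alpha(B)=\max\{s : \mathbf{x}^\alpha\in J_t(G)^{(s)}\}$. It needs the reduction of integer $\mathbf{y}$ to $0/1$ vectors and the blocker duality argument, and it needs the convention that $\alpha$ weights the variables, so that the objective is $\alpha\cdot\mathbf{y}$ with $\mathbf{y}$ indexed by vertices. I would settle this first by a direct argument: a $0/1$ vector $\mathbf{y}$ meets every minimal $t$-cover if and only if its support contains the support of some $\mathbf{a}_j$. Indeed, if it contains no such support, then its complement meets every $\mathrm{Supp}(\mathbf{a}_j)$, so the complement contains a $t$-cover disjoint from $\mathbf{y}$.
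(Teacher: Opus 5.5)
Your proposal is correct and follows the paper's route, with the translation step spelled out. The paper states this theorem only as a consequence of Theorem~\ref{thm: main}, tacitly using the standard equivalence between $\tau_\alpha(B)=\nu_\alpha(B)$ for all $\alpha$ (the max-flow min-cut property of the clutter of minimal $t$-covers) and $J_t(G)^{(s)}=J_t(G)^s$ for all $s$. You verify that equivalence directly via the $0/1$ reduction and blocker duality, and you correctly absorb the case $n=t$ into $M_t(P_t)$.

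As in the paper, invoking Theorem~\ref{thm: main} silently requires $t\geq 3$, so you should state that hypothesis. For $t=2$ the list is incomplete: e.g.\ $J_2(C_4)=\langle x_1x_3,x_2x_4\rangle$ is Simis but not listed.
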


\section*{Acknowledgement}
Experiments carried out using the computer algebra software Macaulay2~\cite{M2} and the package~\cite{SymbolicPowersSource} have provided numerous valuable insights. The first named author is supported by ANRF National Postdoctoral Fellowship. The authors acknowledge the support from the Infosys Foundation.

\bibliographystyle{abbrv}
\bibliography{ref} 

@incollection {Gimnez-et.al2018,
    AUTHOR = {Gimenez, Philippe and Mart\'inez-Bernal, Jos\'e{} and Simis,
              Aron and Villarreal, Rafael H. and Vivares, Carlos E.},
     TITLE = {Symbolic powers of monomial ideals and {C}ohen-{M}acaulay
              vertex-weighted digraphs},
 BOOKTITLE = {Singularities, algebraic geometry, commutative algebra, and
              related topics},
     PAGES = {491--510},
 PUBLISHER = {Springer, Cham},
      YEAR = {2018},
      ISBN = {978-3-319-96826-1; 978-3-319-96827-8},
   MRCLASS = {13F20 (05C20 05C25 13A30)},
  MRNUMBER = {3839809},
MRREVIEWER = {Elo\'isa\ Grifo},
}

@article {Grisalde.et.al2024,
    AUTHOR = {Grisalde, Gonzalo and Mart\'inez-Bernal, Jos\'e{} and
              Villarreal, Rafael H.},
     TITLE = {Normally torsion-free edge ideals of weighted oriented graphs},
   JOURNAL = {Comm. Algebra},
  FJOURNAL = {Communications in Algebra},
    VOLUME = {52},
      YEAR = {2024},
    NUMBER = {4},
     PAGES = {1672--1685},
      ISSN = {0092-7872,1532-4125},
   MRCLASS = {13C70 (05C22 05C25 05E40 13A70 13F20)},
  MRNUMBER = {4715473},
MRREVIEWER = {Thanh\ Vu},
       DOI = {10.1080/00927872.2023.2270058},
       URL = {https://doi.org/10.1080/00927872.2023.2270058},
}

@article {MandalPradhan2021,
    AUTHOR = {Mandal, Mousumi and Pradhan, Dipak Kumar},
     TITLE = {Symbolic powers in weighted oriented graphs},
   JOURNAL = {Internat. J. Algebra Comput.},
  FJOURNAL = {International Journal of Algebra and Computation},
    VOLUME = {31},
      YEAR = {2021},
    NUMBER = {3},
     PAGES = {533--549},
      ISSN = {0218-1967,1793-6500},
   MRCLASS = {05C25 (05C38 05E40 13D02 13D45)},
  MRNUMBER = {4260191},
MRREVIEWER = {Aryampilly\ V.\ Jayanthan},
       DOI = {10.1142/S0218196721500260},
       URL = {https://doi.org/10.1142/S0218196721500260},
}

@article {MandalPradhan2022,
    AUTHOR = {Mandal, Mousumi and Pradhan, Dipak Kumar},
     TITLE = {Comparing symbolic powers of edge ideals of weighted oriented
              graphs},
   JOURNAL = {J. Algebraic Combin.},
  FJOURNAL = {Journal of Algebraic Combinatorics. An International Journal},
    VOLUME = {56},
      YEAR = {2022},
    NUMBER = {2},
     PAGES = {453--474},
      ISSN = {0925-9899,1572-9192},
   MRCLASS = {05C22 (05C25 05C38 05E40)},
  MRNUMBER = {4467962},
MRREVIEWER = {Ko-Wei\ Lih},
       DOI = {10.1007/s10801-022-01118-1},
       URL = {https://doi.org/10.1007/s10801-022-01118-1},
}

@article {CooperEtAl2017,
    AUTHOR = {Cooper, Susan M. and Embree, Robert J. D. and H\`a, Huy T\`ai
              and Hoefel, Andrew H.},
     TITLE = {Symbolic powers of monomial ideals},
   JOURNAL = {Proc. Edinb. Math. Soc. (2)},
  FJOURNAL = {Proceedings of the Edinburgh Mathematical Society. Series II},
    VOLUME = {60},
      YEAR = {2017},
    NUMBER = {1},
     PAGES = {39--55},
      ISSN = {0013-0915,1464-3839},
   MRCLASS = {13F20 (13A02 14N05)},
  MRNUMBER = {3589840},
MRREVIEWER = {Mike\ Janssen},
       DOI = {10.1017/S0013091516000110},
       URL = {https://doi.org/10.1017/S0013091516000110},
}

@Misc{M2,
          author = {Grayson, Daniel R. and Stillman, Michael E.},
          title = {Macaulay2, a software system for research in algebraic geometry},
          howpublished = {Available at \url{http://www.math.uiuc.edu/Macaulay2/}}
        }

@incollection {2015Dao-et.el,
    AUTHOR = {Dao, Hailong and De Stefani, Alessandro and Grifo, Elo\'isa
              and Huneke, Craig and N\'u\~nez-Betancourt, Luis},
     TITLE = {Symbolic powers of ideals},
 BOOKTITLE = {Singularities and foliations. geometry, topology and
              applications},
    SERIES = {Springer Proc. Math. Stat.},
    VOLUME = {222},
     PAGES = {387--432},
 PUBLISHER = {Springer, Cham},
      YEAR = {2018},
      ISBN = {978-3-319-73639-6; 978-3-319-73638-9},
   MRCLASS = {13F20 (14N05)},
  MRNUMBER = {3779569},
MRREVIEWER = {Cleto\ B.\ Miranda-Neto},
       DOI = {10.1007/978-3-319-73639-6\_13},
       URL = {https://doi.org/10.1007/978-3-319-73639-6_13},
}

@article {Paolini-Salvetti,
    AUTHOR = {Paolini, Giovanni and Salvetti, Mario},
     TITLE = {Weighted sheaves and homology of {A}rtin groups},
   JOURNAL = {Algebr. Geom. Topol.},
  FJOURNAL = {Algebraic \& Geometric Topology},
    VOLUME = {18},
      YEAR = {2018},
    NUMBER = {7},
     PAGES = {3943--4000},
      ISSN = {1472-2747,1472-2739},
   MRCLASS = {05E45 (20F36 52C35)},
  MRNUMBER = {3892236},
MRREVIEWER = {Priyavrat\ Deshpande},
       DOI = {10.2140/agt.2018.18.3943},
       URL = {https://doi.org/10.2140/agt.2018.18.3943},
}

@article {Alilooee-Banerjee,
    AUTHOR = {Alilooee, Ali and Banerjee, Arindam},
     TITLE = {Packing properties of cubic square-free monomial ideals},
   JOURNAL = {J. Algebraic Combin.},
  FJOURNAL = {Journal of Algebraic Combinatorics. An International Journal},
    VOLUME = {54},
      YEAR = {2021},
    NUMBER = {3},
     PAGES = {803-813},
      ISSN = {0925-9899,1572-9192},
   MRCLASS = {13F55 (05E40)},
  MRNUMBER = {4330413},
MRREVIEWER = {Ashkan\ Nikseresht},
       DOI = {10.1007/s10801-021-01020-2},
       URL = {https://doi.org/10.1007/s10801-021-01020-2},
}

@article {Bodas-et.al,
    AUTHOR = {Bodas, Hrishikesh and Drabkin, Benjamin and Fong, Caleb and
              Jin, Su and Kim, Justin and Li, Wenxuan and Seceleanu,
              Alexandra and Tang, Tingting and Williams, Brendan},
     TITLE = {Consequences of the packing problem},
   JOURNAL = {J. Algebraic Combin.},
  FJOURNAL = {Journal of Algebraic Combinatorics. An International Journal},
    VOLUME = {54},
      YEAR = {2021},
    NUMBER = {4},
     PAGES = {1095-1117},
      ISSN = {0925-9899,1572-9192},
   MRCLASS = {13F55 (05C15 05C65 05E40)},
  MRNUMBER = {4348918},
MRREVIEWER = {Adam\ L.\ Van Tuyl},
       DOI = {10.1007/s10801-021-01039-5},
       URL = {https://doi.org/10.1007/s10801-021-01039-5},
}

@article{2025Ananthnarayan-et.al,
  TITLE={Linear quotients of connected ideals of graphs},
  AUTHOR={Ananthnarayan, H and Javadekar, Omkar and Maithani, Aryaman},
  JOURNAL={Journal of Algebraic Combinatorics},
  VOLUME={61},
  NUMBER={3},
  PAGES={34},
  YEAR={2025},
  PUBLISHER={Springer}
}

@article{2025Das-et.al,
      TITLE={Stanley-{R}eisner ideals of higher independence complexes of chordal graphs.}, 
      AUTHOR={Kanoy Kumar Das and Amit Roy and Kamalesh Saha},
      YEAR={2026},
      JOURNAL={International Journal of Algebra and Computation, online ready},
      URL={https://doi.org/10.1142/S021819672650013X}, 
}

@article {2022Deshpande-et.al,
    AUTHOR = {Deshpande, Priyavrat and Shukla, Samir and Singh, Anurag},
     TITLE = {Distance {$r$}-domination number and {$r$}-independence
              complexes of graphs},
   JOURNAL = {European J. Combin.},
  FJOURNAL = {European Journal of Combinatorics},
    VOLUME = {102},
      YEAR = {2022},
     PAGES = {Paper No. 103508, 14},
      ISSN = {0195-6698,1095-9971},
   MRCLASS = {05C69 (05C12)},
  MRNUMBER = {4374797},
MRREVIEWER = {Dorota\ Kuziak},
       DOI = {10.1016/j.ejc.2022.103508},
       URL = {https://doi.org/10.1016/j.ejc.2022.103508},
}

@article{2023Abdelmalek-et.al,
  TITLE={Chordal graphs, higher independence and vertex decomposable complexes},
  AUTHOR={Abdelmalek, Fred M and Deshpande, Priyavrat and Goyal, Shuchita and Roy, Amit and Singh, Anurag},
  JOURNAL={International Journal of Algebra and Computation},
  VOLUME={33},
  NUMBER={03},
  PAGES={481--498},
  YEAR={2023},
  PUBLISHER={World Scientific}
}

@article{2024Deshpande-et.al,
  TITLE={Fr{\"o}berg’s theorem, vertex splittability and higher independence complexes},
  AUTHOR={Deshpande, Priyavrat and Roy, Amit and Singh, Anurag and Van Tuyl, Adam},
  JOURNAL={Journal of Commutative Algebra},
  VOLUME={16},
  NUMBER={4},
  PAGES={391--410},
  YEAR={2024},
  PUBLISHER={Rocky Mountain Mathematics Consortium Tempe, AZ, USA}
}

@article{2021Deshpande-et.al,
AUTHOR = {Deshpande, Priyavrat and Singh, Anurag},
YEAR = {2021},
MONTH = {03},
PAGES = {pp. 53–71},
TITLE = {Higher independence complexes of graphs and their homotopy types},
VOLUMME = {36},
JOURNAL = {Journal of the Ramanujan Mathematical Society}
}

@article{2025Ghodh-et.al,
      TITLE={Linear resolution of connected graph ideals and their powers}, 
      AUTHOR={Arka Ghosh and S Selvaraja},
      YEAR={arXiv:2512.06346},
      EPRINT={2512.06346},
      ARCHIVEPREFIX={arXiv},
      URL={https://arxiv.org/abs/2512.06346}, 
}

@article{2025Deshpande-et.al,
      TITLE={The complex of $r$-co-connected subgraphs, chordality and {Fr{\"o}berg}'s theorem}, 
      AUTHOR={Priyavrat Deshpande and Amit Roy and Rutuja Sawant},
      YEAR={arXiv:2510.25710},
      EPRINT={2510.25710},
      ARCHIVEPREFIX={arXiv},
      PRIMARYCLASS={math.CO},
      URL={https://arxiv.org/abs/2510.25710}, 
}

@article{2025Roy-et.al,
  AUTHOR    = {Amit Roy and Sourav Kanti Patra},
  TITLE     = {Graded Betti Numbers of a Hyperedge Ideal Associated to Join of Graphs},
  JOURNAL   = {Bulletin of the Malaysian Mathematical Sciences Society},
  VOLUME    = {48},
  YEAR      = {2025},
  DOI       = {10.1007/s40840-025-01897-3},
}

@article {2005Gitler-et.al,
    AUTHOR = {Gitler, Isidoro and Valencia, Carlos and Villarreal, Rafael
              H.},
     TITLE = {A note on the {R}ees algebra of a bipartite graph},
   JOURNAL = {J. Pure Appl. Algebra},
  FJOURNAL = {Journal of Pure and Applied Algebra},
    VOLUME = {201},
      YEAR = {2005},
    NUMBER = {1-3},
     PAGES = {17--24},
      ISSN = {0022-4049,1873-1376},
   MRCLASS = {13A30 (13F20)},
  MRNUMBER = {2158744},
MRREVIEWER = {Viviana\ Ene},
       DOI = {10.1016/j.jpaa.2004.12.013},
       URL = {https://doi.org/10.1016/j.jpaa.2004.12.013},
}

@article{1990Cornejols-et.al,
AUTHOR = {G{\'e}rard Cornu{\'e}jols
 and Michele Conforti},
TITLE = {A decomposition theorem for balanced matrices, Integer Programming and Combinatorial Optimization},
YEAR = {1990},
VOLUME = {74},
JOURNAL = {},
PAGES = {147-169},
}

@article {2025Banerjee-et.al,
    AUTHOR = {Banerjee, Arindam and Das, Kanoy Kumar and Selvaraja, S.},
     TITLE = {Ordinary and symbolic powers of edge ideals of weighted
              oriented graphs},
   JOURNAL = {Bull. Malays. Math. Sci. Soc.},
  FJOURNAL = {Bulletin of the Malaysian Mathematical Sciences Society},
    VOLUME = {48},
      YEAR = {2025},
    NUMBER = {3},
     PAGES = {Paper No. 85, 22},
      ISSN = {0126-6705,2180-4206},
   MRCLASS = {13F20 (05C22 05E40 13A70)},
  MRNUMBER = {4889334},
       DOI = {10.1007/s40840-025-01842-4},
       URL = {https://doi.org/10.1007/s40840-025-01842-4},
}

@article{2024Kannoy,
AUTHOR = {Kanoy Kumar Das},
TITLE = {Equality of ordinary and symbolic powers of some classes of monomial ideals}, 
JOURNAL = {Graphs and Combinatorics},
YEAR = {2024},
VOLUME = {40(12)},
}

@article {2023Banerjee-et.al,
    AUTHOR = {Banerjee, Arindam and Chakraborty, Bidwan and Das, Kanoy Kumar
              and Mandal, Mousumi and Selvaraja, S.},
     TITLE = {Equality of ordinary and symbolic powers of edge ideals of
              weighted oriented graphs},
   JOURNAL = {Comm. Algebra},
  FJOURNAL = {Communications in Algebra},
    VOLUME = {51},
      YEAR = {2023},
    NUMBER = {4},
     PAGES = {1575--1580},
      ISSN = {0092-7872,1532-4125},
   MRCLASS = {13F20 (05C22 05E40)},
  MRNUMBER = {4552913},
MRREVIEWER = {Monica\ La Barbiera},
       DOI = {10.1080/00927872.2022.2139834},
       URL = {https://doi.org/10.1080/00927872.2022.2139834},
}

@article{2025Roy-Saha,
AUTHOR = {Amit Roy and Kamalesh Saha},
TITLE = {Equality of ordinary and symbolic powers and the Conforti-Cornu\'ejols conjecture for $(n-2)$-uniform clutters}, 
YEAR = {2025},
EPRINT = {2510.15864},
ARCHIVEPREFIX = {arXiv},
PRIMARYCLASS={math.AC},
URL = {https://arxiv.org/abs/2510.15864}, 
}

@article {2025Pinto-et.al,
    AUTHOR = {M\'endez, Fernando O. and Vaz Pinto, Maria and Villarreal,
              Rafael H.},
     TITLE = {Symbolic powers: {S}imis and weighted monomial ideals},
   JOURNAL = {J. Algebra Appl.},
  FJOURNAL = {Journal of Algebra and its Applications},
    VOLUME = {24},
      YEAR = {2025},
    NUMBER = {13-14},
     PAGES = {Paper No. 2541001, 28},
      ISSN = {0219-4988,1793-6829},
   MRCLASS = {13C70 (05C22 05E40 13A70 13F20)},
  MRNUMBER = {4994910},
       DOI = {10.1142/S0219498825410014},
       URL = {https://doi.org/10.1142/S0219498825410014},
}

@article {2009Gitler-et.al,
    AUTHOR = {Gitler, Isidoro and Reyes, Enrique and Villarreal, Rafael H.},
     TITLE = {Blowup algebras of square-free monomial ideals and some links
              to combinatorial optimization problems},
   JOURNAL = {Rocky Mountain J. Math.},
  FJOURNAL = {The Rocky Mountain Journal of Mathematics},
    VOLUME = {39},
      YEAR = {2009},
    NUMBER = {1},
     PAGES = {71--102},
      ISSN = {0035-7596,1945-3795},
   MRCLASS = {13A30 (13B22 13F20 13H10 52B20)},
  MRNUMBER = {2476802},
MRREVIEWER = {John\ Clark},
       DOI = {10.1216/RMJ-2009-39-1-71},
       URL = {https://doi.org/10.1216/RMJ-2009-39-1-71},
}

@article{2024Nasernejad,
AUTHOR = {W. Hochst{\"a}ttler and M. Nasernejad},
TITLE = {A classification of mengerian 4-uniform hypergraphs derived from
graphs},
JOURNAL = {Ars Combinatoria},
YEAR = {2024},
VOLUME = {161},
PAGES = {49-59},
}

@article {1994Simis-et.al,
    AUTHOR = {Simis, Aron and Vasconcelos, Wolmer V. and Villarreal, Rafael
              H.},
     TITLE = {On the ideal theory of graphs},
   JOURNAL = {J. Algebra},
  FJOURNAL = {Journal of Algebra},
    VOLUME = {167},
      YEAR = {1994},
    NUMBER = {2},
     PAGES = {389--416},
      ISSN = {0021-8693,1090-266X},
   MRCLASS = {13A30 (05C90 13H10)},
  MRNUMBER = {1283294},
MRREVIEWER = {P.\ Schenzel},
       DOI = {10.1006/jabr.1994.1192},
       URL = {https://doi.org/10.1006/jabr.1994.1192},
}

@article{2025Ficarra-Moradi,
AUTHOR = {A. Ficarra and S. Moradi},
TITLE = {Symbolic powers of polymatroidal ideals},
JOURNAL = {Journal of Pure and Applied Algebra},
YEAR = {2025},
VOLUME = {229(10)},
PAGES = {108082},
}

@misc{SymbolicPowersSource,
  title = {{SymbolicPowers: A \emph{Macaulay2} package. Version~2.0}},
  author = {Eloisa Grifo},
  howpublished = {A \emph{Macaulay2} package available at
    \url{https://github.com/Macaulay2/M2/tree/stable/M2/Macaulay2/packages}}
}


\end{document}